\newtheorem{assumption}{Assumption}[section]
\newtheorem{proposition}{Proposition}[section]
\crefname{assumption}{Assumption}{Assumptions}
\newtheorem{definition}{Definition}[section]
\newcommand{\Actorsset}{\ensuremath{\mathcal{A}}\xspace}
\newcommand{\Edgeset}{\ensuremath{\mathcal{E}}\xspace}
\newcommand{\Scenarioset}{\ensuremath{\mathcal{S}}\xspace}
\newcommand{\Timeset}{\ensuremath{\mathcal{T}}\xspace}
\newcommand{\Compactloopset}{\ensuremath{\mathcal{L}}\xspace}
\newcommand{\Extendedloopset}{\ensuremath{\mathcal{H}}\xspace}
\newcommand{\Excessset}{\ensuremath{\mathcal{G}}\xspace}
\newcommand{\Vertexset}{\ensuremath{\mathcal{V}}\xspace}
\newcommand{\Neighbourset}{\ensuremath{\mathcal{N}}\xspace}
\newcommand{\Directset}{\ensuremath{\mathcal{D}}\xspace}
\newcommand{\rev}[1]  {{#1}}
\journal{Arxiv}
\begin{document}

\begin{frontmatter}

%% Title, authors and addresses
\title{Optimisation models for the design of multiple self-consumption loops in semi-rural areas}

\author[inst1]{Yohann Chasseray\orcidlink{0000-0001-2345-6789}\corref{cor1}\,}
\ead{yohann.chasseray@irit.fr}
\author[inst2]{Mathieu Besançon\,\orcidlink{0000-0002-6284-3033}\,}
\author[inst3]{Xavier Lorca\,\orcidlink{0000-0002-6534-8644}\,}
\author[inst3]{Éva Petitdemange\,\orcidlink{0000-0003-3200-0446}\,}
\cortext[cor1]{Corresponding author.}

\affiliation[inst1]{organization={IRIT, Université de Toulouse, CNRS, Toulouse INP, INU Champollion},
            addressline={118 Route de Narbonne},
            city={Toulouse},
            postcode={31000},
            state={Occitanie},
            country={France}}

\affiliation[inst2]{organization={Université Grenoble Alpes, Inria, LIG},
            addressline={621 avenue centrale}, 
            city={Saint-Martin-d'Hères},
            postcode={38400}, 
            country={France}}
            
\affiliation[inst3]{organization={IMT Mines Albi},
            addressline={1 allée des sciences, Campus Jarlard}, 
            city={Albi},
            postcode={81000}, 
            country={France}}

\begin{abstract}
%% Text of abstract
Collective electricity self-consumption gains increasing interest in a context where localised consumption of energy is a lever of sustainable development. While easing energy distribution networks, collective self-consumption requires complementary prosumers' profiles. Before determining the proper energy exchanges happening between these prosumers, one must ensure their compatibility in the context of collective self-consumption. Motivated by real use cases from a solar power producer, this article proposes network flow-based linear models to solve both the design aspect of the problem and the attribution of distribution flows between involved prosumers. Two main models are proposed, handling (i) single collective self-consumption loop creation and (ii) multiple loop creation. One of the objectives of this work is to provide models that can be applied at a strategic level which implies their extension to large time scale and large spatial scale. To do so, associated Benders and Dantzig-Wolfe decompositions are proposed to ensure model scalability along temporal and spatial dimensions. Proposed models are tested on realistic use cases to assess their performances.
\end{abstract}

\begin{keyword}
%% keywords here, in the form: keyword \sep keyword
OR in energy \sep Mixed-integer linear programming \sep Benders decomposition \sep Dantzig-Wolfe decomposition \sep Smart grid
%% PACS codes here, in the form: \PACS code \sep code
% \PACS 0000 \sep 1111
%% MSC codes here, in the form: \MSC code \sep code
%% or \MSC[2008] code \sep code (2000 is the default)
% \MSC 0000 \sep 1111
\end{keyword}

\end{frontmatter}

% \section*{Conflict of Interest}
% % Yohann Chasseray declares that he has no conflict of interest. Mathieu Besançon declares that he has no conflict of interest. Xavier Lorca declares that he has no conflict of interest. Éva Petitdemange declares that she has no conflict of interest.
% All of the authors declare that they have no conflict of interest.
% \newpage

%% \linenumbers

%% main text
\section{Introduction}
\label{sec:introduction}

In a context where the impact of global warming and the increasing scarcity of the planet's resources are being felt more and more, the management of energy resources has become a central issue and one of the major challenges our societies are facing today. Achieving a sustainable global energy production system means introducing renewable energy into the energy mix. While the share of renewable energy sources in many production systems has increased in recent decades, their internal characteristics affect the way energy is distributed. Intermittency of renewable energy sources such as wind and solar means dealing with temporally and spatially variable energy sources. In particular, since external factors, most often meteorological influence them, temporal variations can be quite unpredictable. This is especially true for photovoltaic and wind energy, for example.

Moreover, these sources have injection points that are much more decentralised and scattered across the territory than traditional sources such as nuclear or coal power plants. All these characteristics imply a degree of uncertainty which impacts national distribution networks.

All these changes force countries and energy providers to adapt their distribution schemes towards localised energy consumption. Localised consumption poses the question of a more globally decentralised energy system, as many undergoing benefits can be identified. First, achieving a localised network is a way to cut off transmission costs. In a localised network, power losses induced by long-distance transmission are minimised which would be in favour of energy efficiency \citep{kroposki2017integrating}. Reaching energy efficiency also supposes that the production of the network is tailored to the consumption of a considered area. Pleading for a decentralised network means that considered areas are revised to be local \citep{conejo2018rethinking}. As it would be easier to estimate energy needs on a local scale than on a global scale, decentralised production assures the efficiency of energy resource use. The proximity of generation and consumption sites also enhances grid stability. By reducing the risk of major disruptions, decentralisation then improves the resilience of the whole energy network \citep{mohamad2018development}. Resilience and efficiency of the network become particularly pertinent when considering the likelihood of a substantial increase in electricity consumption in upcoming decades, not only in terms of overall volume but also in terms of variability in usage (electrical cars, buses, trains). As mentioned at the beginning of this section, the network is thought to integrate renewable energies. These energies are unevenly distributed across regions. Considering local networks ensures a more accurate estimation of the involvement of renewable sources in energy generation within each specific area allowing the consideration of each area specificity \citep{lund2009effects}.

A common way to localise energy consumption is self-consumption, where produced energy is consumed onsite by its producer himself. Recent legislation (e.g.~\citet{decretacc} in France) extended self-consumption principles to collective self-consumption where producers and consumers remain locally positioned but can be represented by different legal entities. Despite this favourable context, some constraints (geographical and installed power restrictions) limit the formation of collective self-consumption operations. Besides these constraints, setting up collective self-consumption requires different but complementary consumer and producer profiles to ensure mutual interest, resulting in few existing operations.

An overarching decision-making system would be necessary to facilitate the establishment of collective self-consumption communities including solar energy companies, other producing and consuming companies and individual dwellings.
This system should consider technical, economic, and legal constraints and propose feasible and optimal loops.
Such a system will also facilitate discussions on a potential profitability of various communities.
This need for an integrated decision support system is notably formulated by Amarenco - a solar energy production company - and motivated as a real use-case our work to design optimal self-consumption loops.

In pursuit of this objective, we formulate the design of self-consumption loops as a mixed-integer linear optimisation problem. Given that the network design needs to encompass a significant time horizon and cover a relatively expansive area (spanning several cities), the proposed model has been extended with Benders and Dantzig-Wolfe decomposition techniques to address challenges on both large temporal and spatial scales.

The next section presents related work on individual and collective self-consumption optimisation with a focus on application-specific modelling choices and optimisation methods. \cref{sec:optimisation-models} details the single loop model (SLCpct) and multiloop model (MLCpct) as well as their Dantzig-Wolfe (MLCol) and Benders (SLExt, MLColExt) decompositions. \cref{sec:models-application} details the creation process of realistic instances to apply the models. This section also defines operational indicators built out of model variables and parameters that can help evaluate proposed community designs. \cref{sec:results} presents results of the application of the models on priory defined realistic instances. The performance of the models is discussed, and insights regarding the application of the models in the context of the application are provided. \cref{sec:conclusion} provides some perspectives regarding the developed models and their potential coupling with domain-related knowledge bases, leading to the conclusion and planned future works. 

\section{Related work}
\label{sec:related-work}

Despite the recent arrival of self-consumption legislation (e.g.~\cite{decretacc} in France) allowing different local actors to share a common photovoltaic production, several studies already got their interest focused on the simulation and optimisation of energy communities.

A literature review has been conducted under the criteria shown in \cref{tab:slr-criteria}, which describes two main aspects of previous studies, one being related to the size of the tackled problems and used methodology, and the other related to application-specific issues. In this systematic literature review, the objective was to find articles using optimisation techniques to ease the building of self-consumption operations that include several actors in a given community.

\begin{table}[ht]
  \centering
  \caption{Literature review evaluation criteria}
    \adjustbox{max width=\textwidth}{
    % Table generated by Excel2LaTeX from sheet 'Feuil1'
\begin{tabular}{|p{9em}p{39em}|}
\hline
\textbf{Criterion}     & \textbf{Description} \\
\hhline{==}
Problem size           & How many actors are considered? How are they dispatched on the territory? \bigstrut\\
\hline
Time range             & What are the optimisation time periods and horizon? \bigstrut\\
\hline
Method                 & Which method, algorithm, or model are used to solve the optimisation/simulation problem? \bigstrut\\
\hline
Self-consumption       & Is the collective aspect of self-consumption considered in the model? \bigstrut\\
\hline
Actors selection       & Does the model include the selection of actors and installations? \bigstrut\\
\hline
Leg. constraints       & Are some legal constraints integrated into the model? \bigstrut\\

\hline
\end{tabular}%

}%
  \label{tab:slr-criteria}%
\end{table}%

\cref{tab:slr-sumup} summarizes the conducted literature review and compares our approach to some of the analysed articles. The presented articles are selected as they encompass one or several dimensions of our approach.\\

\begin{table}[ht]
  \centering
  \caption{Comparison of analysed articles (NR: Not Relevant - NA: Not Available)}
    \adjustbox{max width=\textwidth}{
    % Table generated by Excel2LaTeX from sheet 'Selection'
\begin{tabular}{|l|lp{5.5em}lp{8.3em}p{6.665em}|}
\hline
\multirow{2}[4]{*}{Reference} & \multicolumn{3}{l|}{Size of the problem}                                 & \multicolumn{1}{l|}{\multirow{2}[4]{*}{Method}} & \multicolumn{1}{l|}{\multirow{2}[4]{*}{Actors selection}} \bigstrut\\
\cline{2-4}                       & \multicolumn{1}{p{4.165em}|}{nb actors} & \multicolumn{1}{p{5.5em}|}{time horizon} & \multicolumn{1}{l|}{time range} & \multicolumn{1}{l|}{}  & \multicolumn{1}{l|}{} \bigstrut\\
\hline
\cite{al-sorour_enhancing_2022}       & 6                      & 4 months               & 10 min                 & MILP                   & Yes \bigstrut\\
\hline
\cite{bahret_costoptimized_2021}       & 20                     & 1 year                 & NA                     & MILP                   & No \bigstrut\\
\hline
\cite{brusco_renewable_2023}          & 41                     & NA                     & hour                   & MILP                   & No \bigstrut\\
\hline
\cite{espadinha_assessing_2023}       & 4                      & 1 year                 & hour                   & Func minimisation  & No \bigstrut\\
\hline
\cite{gil_mena_analysis_2023}        & 12                     & 1 year                 & hour                   & Heuristic              & No \bigstrut\\
\hline
\cite{goitia-zabaleta_two-stage_2023} & 15                     & 1 year                 & NA                     & MILP                   & No \bigstrut\\
\hline
\cite{gulli_recoupled_2022}           & 4                      & 1 year                 & hour                   & MILP                   & No \bigstrut\\
\hline
\cite{luz_modeling_2021}         & 2                      & NR                     & 15 min                 & NA                     & No \bigstrut\\
\hline
\cite{mustika_new_2022}         & 7                      & 1 month                & 30 min                 & MINLP                  & No \bigstrut\\
\hline
\cite{perger_pv_2021}          & 15                     & 1 year                 & hour                   & MILP                   & No \bigstrut\\
\hline
\cite{pinto_optimization_2020}          & 50                     & NA                     & NA                     & MILP                   & No \bigstrut\\
\hline
\cite{reis_collective_2022}            & 50                     & NA                     & min                    & MAS - Heurisitic       & No \bigstrut\\
\hline
\cite{sangare_loads_2023}        & 7                      & 1 day                  & 30 min                 & MILP                   & No \bigstrut\\
\hline
\cite{simoiu_sizing_2021}          & 10                     & NA                     & hour                   & MILP                   & No \bigstrut\\
\hline
\cite{stephant_distributed_2021}        & 8                      & 1 day                  & NA                     & Game Theory            & No \bigstrut\\
\hline
Proposed model         & 100                    & 6 months               & hour                   & MILP - Benders Dantzig-Wolfe & Yes \bigstrut\\
\hline
\end{tabular}%

}%
  \label{tab:slr-sumup}%
\end{table}%

\subsection{Specific application context}

Related lines of work show that electricity consumption optimisation is an active topic of interest. Despite some studies focusing on the specific cases of multi-energy communities, including gas, wind, and/or heat as energy sources \citep{bio_gassi_analysis_2022,gulli_recoupled_2022,pinto_optimization_2020,bokkisam_blockchain-based_2022,bahret_costoptimized_2021}, most of the identified papers deal with solar energy only, as it represents the majority of current collective self-consumption projects.

A large majority of previous work focuses on the management of self-consumption or collective self-consumption operations making the assumption that actors of loops are already defined and fixed. \cite{mustika_two-stage_2022} study the impact of the addition of a newcomer into an existing collective self-consumption loop. They propose a heuristic method to define which actor has the most impact on a self-consumption loop. Among studied articles, some only focus on individual self-consumption without considering multiple actors. Some configurations, although technically considered collective self-consumption as they involve several households, remain in a middle ground between individual and collective self-consumption. Many articles, such as \cite{gulli_recoupled_2022,mustika_new_2022,pasqui_new_2023,stephant_distributed_2021} apply their model to a group of households being located in the same building or in the same neighbourhood, which can technically be defined as a collective self-consumption loop, but remains clearly limited geographically and in terms of number of individual actors.
Geographically framed models fail to take into account the legal distance constraint that is inherent to collective self-consumption definition \citep{strepparava_privacy_2022}. More generally, regulatory constraints are rarely considered explicitly in optimisation models but are rather used as a criterion to assess found solutions \citep{pinto_optimization_2020, gribiss_configuration_2023, gil_mena_analysis_2023}.

Other lines of work extend the diversity of analysed profiles to include other types of consumers, such as professional buildings \citep{wang_peer--peer_2021,goitia-zabaleta_two-stage_2023}, electric vehicle pools \citep{surmann_agent-based_2020,piazza_impact_2023}, a winery \citep{luz_modeling_2021} or a train station \citep{simoiu_sizing_2021}, showing the economic interest of collective self-consumption.
Enlarging the diversity of profiles is in favour of collective self-consumption as diverse actors may present complementary consumption and production profiles.

Extending the breadth of considered actors also implies an extension of the considered area.
In most cases, proposed algorithms and models are applied to an urban case study \citep{brusco_renewable_2023,goitia-zabaleta_two-stage_2023,gulli_recoupled_2022,mustika_new_2022,perger_pv_2021}.
Beyond revealing that rural or semi-rural self-consumption loop configurations are rarely considered in the conception of optimisation models, it also underlines the fact that most current self-consumption loops are essentially built in urban context thus limiting the range of actors that can be considered. If urban contexts may favour collective self-consumption as the density of consumers and prosumers is higher than in rural contexts, focusing on semi-rural contexts can also have some advantages, as the constraint of geographic contiguity can be revised by regulatory texts and open to new types of producers. \cite{luz_modeling_2021} showed for instance the benefits of creating an energy community between a rural winery and a nearby city.

Extending the range of considered actors also means that proposed models should adapt to a growing number of self-consumption participants. Despite extending the diversity of involved actors, some articles focus on larger problems, defining optimisation for more than 50 actors \citep{pinto_optimization_2020,reis_collective_2022,bohringer_benefits_2023,luz_coordinating_2021}.

In a nutshell, only few previous lines of work treat the problem on both a spatial dimension (large number of actors) and a temporal dimension (large time horizon and high number of time steps). Above all, the proposed approaches are mainly focused on flow repartition, and very few of them address the design aspect of collective self-consumption loops definition. Among them, \cite{al-sorour_enhancing_2022} proposes models that include the selection of peer-to-peer household pairs, ensuring that each household is involved in only one pairing. Since this approach involves actors selection, the design context differs as the pairing is determined through semi real-time forecasting (selection is conducted on a 2-day ahead forecast), and the model’s complexity lies in its consideration of storage facilities. Moreover, the pairing selection is applied to a predefined and relatively small group of actors (6 households), which minimizes solving time concerns that typically arise in larger territories, where the selection must be performed over a greater number of actors.

\subsection{Modelling and solving approaches}

Mainly, three optimisation approaches are used, which are (i) mixed-integer linear programming models, (ii) agent-based system models and (iii) heuristics or meta-heuristics optimisation algorithms. In articles such as \cite{reis_collective_2022,stephant_distributed_2021,surmann_agent-based_2020}, agent-based models are mostly used to ensure that each participant of a self-consumption loop is making its choices independently from others and that the optimisation system is taking into account its preferences. \cite{luz_coordinating_2021} adopts the point of view of device scheduling, each device having constraints concerning its use. Agent-based programming is used to model situations where actors do not have the information about other actors to decide a strategy. It allows a compromise between the research of a group of optimal solutions while keeping constraints on each individual's preferences. In energy-sharing communities, one of the issues is the privacy regarding participant data (production and load data). As optimising a community and its associated exchanges requires knowing each participant's consumption and production profile, some work \citep{bokkisam_blockchain-based_2022} focus on systems that guarantee anonymity while allowing the use of participant's information concerning their consumption and production.

In terms of practical application, most of the articles use theoretical or simulated data to represent realistic scenarios \citep{gil_mena_analysis_2023,gribiss_configuration_2023,luz_coordinating_2021}. However, some of them are driven by a real use case involving local energy communities.

Heuristics and meta-heuristics methods are generally used to deal with larger problems in a reasonable amount of time. In the case of collective self-consumption optimisation two dimensions increase the size of problems, which are, the number of integrated producers, as well as time horizon and time step considered. \cite{wang_peer--peer_2021} for instance, use a particle swarm algorithm to optimise storage and trading strategies within an energy community involving a large number of participants (10 professional buildings and 90 households). \cite{gil_mena_analysis_2023} used the JAYA heuristic algorithm on hourly data over a one-year horizon. Similarly, \cite{heidari_physical_2021} used a genetic algorithm as the thermodynamic model used in their study is nonlinear, and becomes time-consuming for large time horizons.  

Mixed-integer linear programming models, however, remain more common methods used for collective self-consumption optimisation as exchanges between actors can easily be modelled through linear flow-type constraints. Even though they remain less costly than non-linear models, linear programming is mostly applied to small problems, which rarely exceed 10 or 20 participants on large time horizons.

The problem of collective self-consumption has to be designed over several time steps and should be considered on a more large scale than the limited cases of already existing communities involving few participants. Despite the interest of mixed-integer programming models for collective self-consumption optimisation due to their ease in modelling exchanges between actors, they are usually applied to small-scale problems involving either a few participants or a short time horizon. Addressing the complexity of collective self-consumption, however, requires a broader perspective.

As highlighted in \cref{tab:slr-sumup}, our proposal fills the identified gap in the literature regarding self-consumption loops in rural or semi-rural environments. In particular, the proposed multiloop model tends to address three aspects that have been overlooked in previous work, which are:
\begin{itemize}
    \item The integration of legal constraints directly in optimisation models when defining self-consumption loops.
    \item The inclusion of loop configuration in the optimisation problem, making it possible to build the optimal set of energy communities based on a given group of individual prosumers and their characteristics (consumption and production profile, geographical location).
    \item Model decompositions to scale to larger problems in space and in time. Very few work use both decompositions, as their combination highly complexifies the algorithms. Thus, we exploit sparsity of the neighbourhood graph to use a Dantzig-Wolfe formulation without column generation, allowing us to extend the size of the models in time and space.
\end{itemize}

\section{Optimisation models}
\label{sec:optimisation-models}

This section details the models that have been defined to find the optimal microgrid configuration for a given list of prosumers. Two main models have been defined named single loop model and multiloop model. These initial models may present a long solving time when dealing with large instances. As illustrated in \cref{fig:decompositions}, these limitations arise regarding two dimensions which are (i) growing time scale, and (ii) growing geographical scale (inducing a higher number of prosumers).

To tackle temporal scale limitations, a Benders decomposition is proposed, which splits the problem formulation into two subproblems. A first one (master problem) resolves the design aspect of the network in terms of member participation to self-consumption loops. A second one (subproblem) adjusts energy exchanges between net producers and net prosumers.

The geographical scale dimension is approached using a Dantzig-Wolfe decomposition where the column generation process is simplified through the generation of feasible loops directly from the prosumer's neighbourhood graph, avoiding the branch-and-price algorithm.

The combination of the two decompositions (Benders and Dantzig-Wolfe) on the multiloop model is then used to scale the problem-solving capacities from small numbers of actors and short time horizon to larger problems in both dimensions.

From an application point of view, the design problem can be considered at an operational level (day-to-day optimisation of self-consumption in a small neighbourhood). Our objective though is to treat strategic network design problems, as collective self-consumption is planned for several years, on potentially large territories. This changes the size of studied problems, making it necessary to scale them in time and space through presented decompositions. The combination of initial models with their decomposition led to the 5 following models, also shown in \cref{fig:decompositions}, whose performance is compared in \cref{sec:results}:

\begin{itemize}
    \item \textbf{Single loop compact model (SLCpct)} assumes that only one self-consumption loop is to be built and each actor can either join or stay apart from this self-consumption loop.
    \item \textbf{Benders extended single loop model (SLExt)} decomposes the single loop model to allow large time scale problems to be solved in a reasonable amount of time. 
    \item \textbf{Multiloop compact model (MLCpct)} extends the single loop model by providing the possibility to build several self-consumption loops with an actor joining one or none of them.
    \item \textbf{Generated cliques multiloop model (MLCol)} decomposes the multiloop model using Dantzig-Wolfe decomposition and a clique generation algorithm.
    \item \textbf{Benders extended multiloop model (MLColExt)} decomposes the Extended multiloop model to allow large time-scale problem resolution for the Dantzig-Wolfe decomposition of the multiloop model.
\end{itemize}

\begin{figure}[h]
    \centering
    \includegraphics[trim={2.1cm 0cm 2.1cm 0cm},width=1\textwidth,clip=true]{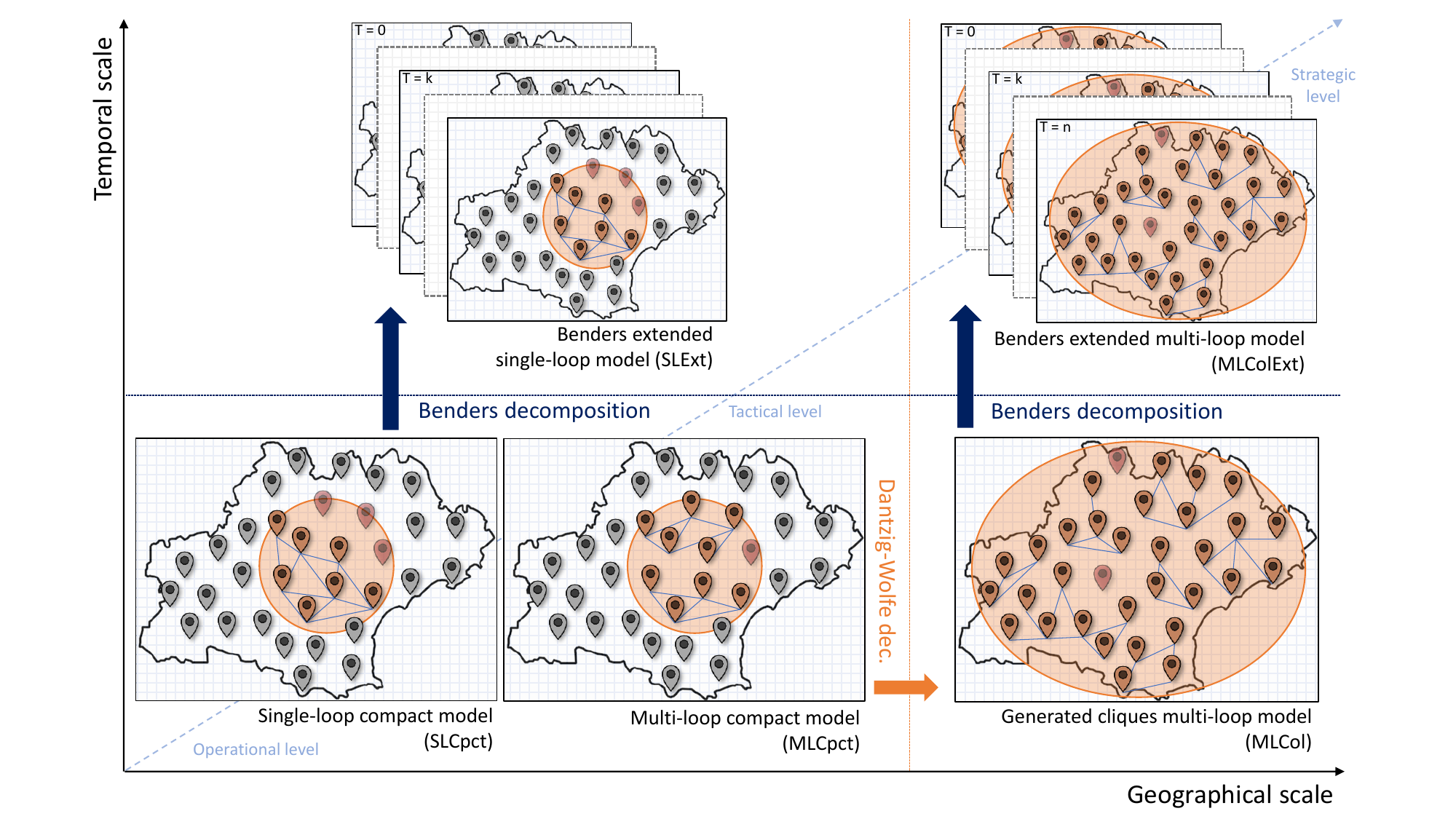}
    \caption{Objective and impact of solving approaches (Dantzig-Wolfe and Benders decomposition).}
    \label{fig:decompositions}
\end{figure}

All the presented problems embed a time scale dimension and can include a stochastic aspect to reason on a set of possible scenarios.

\subsection{Definitions and assumptions}

To express the optimisation problem, we make the following assumptions:

\begin{assumption}[Fixed actors]\label{assum:known-actors}
The list of potential members of the loops is known and fixed before the optimisation. Whether each actor belongs or not to the self-consumption loop is however still to be determined.
\end{assumption}

\begin{assumption}[External electricity pricing]\label{assum:external-price}
The electricity buying and selling cost outside the loop may vary over the actors and is proportional to the amount of bought or sold electricity. Details about price determination are given in \cref{sec:pricing}.
\end{assumption}

\begin{assumption}[Prosumers actors]\label{assum:prosumers}
Each actor is modelled as a prosumer. A producer can however have zero production as well as a consumer can have zero consumption. They are thus respectively depicted as direct consumers and direct producers.
\end{assumption}

\begin{assumption}[Consumption/production data]\label{assum:data}
The optimisation takes place over several time steps. For each time step and each scenario, the consumption and production are known for each actor. Consumption can be predicted from previous consumption data of an actor and production data can be evaluated through solar plant characteristics (installed power, panel orientation, panel tilt) and local environmental conditions (irradiation) based on the localisation of actors.
\end{assumption}

\begin{assumption}[Loop exclusivity]\label{assum:loop-exclusivity}
Single loop and multiloop configurations can be considered. In the single loop scenario, an actor either participates in the loop or is excluded from the loop. In the multiloop scenario, an actor can participate in one loop (only) among several loops or can be excluded from all of the loops.
\end{assumption}

Within the scope defined by the previous assumptions, we set the following optimisation parameters, which will help define the optimisation problem  (See~\cref{fig:graph}), \rev{and are also reported in the extensive list of notations provided in}~\ref{app:notations} :

\begin{itemize} 
\item $\mathcal{A} = \{A_{1}, \ldots, A_{n}\}$ all available actors that are potential members of a self-consumption loop.
\item $C_{1}, \ldots, C_{n}$ the net consumption of the available actors (when needed, absolute consumptions are expressed with $C_{\mathrm{abs},i}$ expression). 
\item $P_{1},  \ldots, P_{n}$ the net quantity of energy produced by each of the available actors (when needed, absolute productions are expressed with $P_{\mathrm{abs},i}$ expression).
\item $F_{1}, \ldots, F_{n}$ the grid-sourced electricity cost for each available actor.
\item $R_{1} \ldots, R_{n}$ the grid electricity selling price for each available actor.
\item $\Compactloopset = {l_{1}, \ldots, l_{n/2}}$ the set of possible self-consumption loops having at least 2 actors each, given the n available actors.
\item $d_{ij}$ is the geographical distance between two actors and $d_{\text{leg}}$ is the maximal geographical distance legally acceptable between two actors of a same self-consumption loop. \rev{We will denote by $\Edgeset$ the set of edges in the graph, and $\Neighbourset_i$ the neighbours of node $i$.}
\item $P^{\text{inst}}_{i}$ installed power of each actor and $P^{\text{inst}}_{\text{leg}}$ the installed power legal limit that should not be exceeded by the cumulative installed power of the members of a self-consumption loop.
\end{itemize}

\begin{definition}[Direct consumer]
    An actor $A_i$ is called a direct consumer when $P_i = 0$
\end{definition}

\begin{definition}[Direct producer]
    An actor $A_i$ is called a direct producer when $C_i = 0$
\end{definition}

For a natural number $n$, we use $[n] \equiv \{1 \ldots n\}$ and define the following decision variables:
\begin{itemize}
\item $x_{i} \in \{0,1\}, i \in [n]$ the binary variables indicating that actor $i$ is a member of a self-consumption loop. When extending to multiple loops, those variables are indexed by the loop: $x_i^l$. 
\item $e_{ij}, (i,j) \in [n]^2, i \neq j$ the real variables indicating the amount of energy that each actor $i$ exchange with actor $j$ within a self-consumption loop \rev{(going from actor $i$ to actor $j$)}. Self-consumed energy amounts ($e_{ii}$) are not represented as decision variables as they can be deduced afterwards.
\item $f_{1}, f_{2}, \ldots, f_{n}$ the amounts of energy supplied by the grid to each actor.
\item $r_{1}, r_{2}, \ldots, r_{n}$ the amounts of energy sold to the grid by each actor.
\end{itemize}

\begin{figure}[!h]
    \centerline{\includegraphics[trim={5.5cm 5.2cm 5.5cm 5.2cm},width=1\textwidth,clip=true]{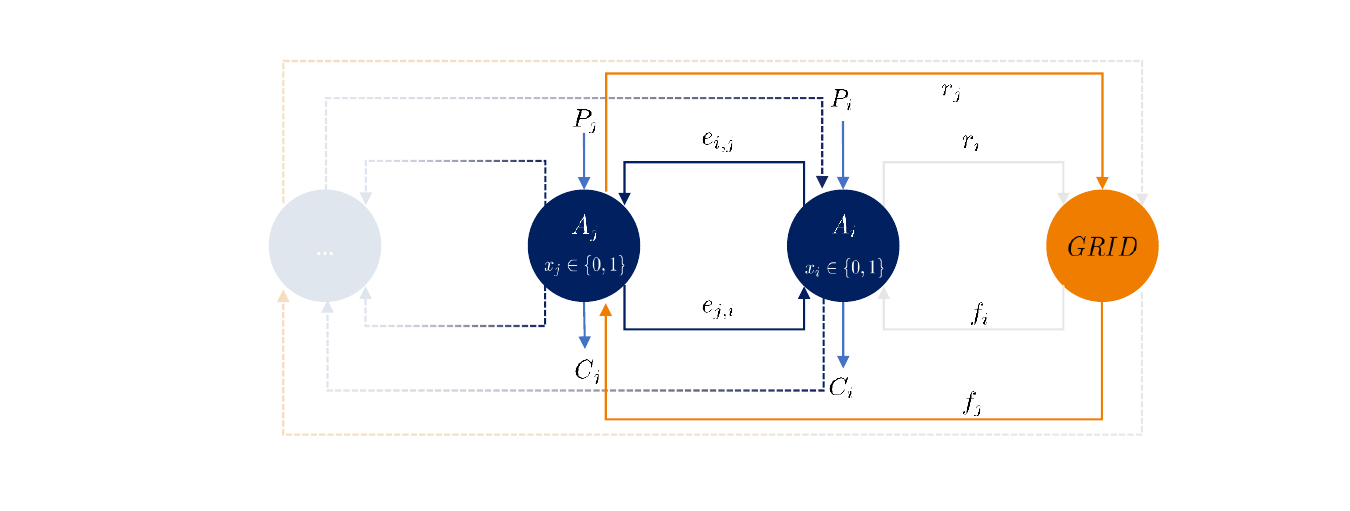}}
    \caption{Network flow model representation}
    \label{fig:graph}
\end{figure}

\subsection{Initial single loop network design and flow circulation problem}
\label{sec:single-loop-model}

In the initial flow circulation problem the first constraints to be defined are Kirchhoff flow conservation constraints, which ensure flow circulation avoiding accumulation of the energy exchanged. For each actor, flow conservation is expressed through the equality constraint between the energy amount received by the actor and the energy amount given away by the same actor. Received energy is the addition of energy received from other actors, energy produced by the actor and energy taken from the grid. Sent energy is the addition of energy given to other actors, energy consumed by the actor and energy sold to the grid. In the complete model presented below, this constraint is expressed with equation \eqref{eq:sl-kir}.

The complete single loop compact model provided as Model~\eqref{model:compact} (SLCpct) expresses the coupled problem of network design and flow repartition in a single loop configuration, meaning that each prosumer either belongs to this loop or is not participating in collective self-consumption. Details about each constraint and the objective function are given throughout \cref{sec:sc-constraints} to \ref{sec:obj-function}.

\begin{subequations}\label{model:compact}
\begin{align}
\min_{x, e, r, f, z, y} & \sum_{s\in\Scenarioset} \sum_{t\in\Timeset} \sum_{i\in [n]} p_s\, (F_i^{st} f_i^{st} - R_i^{st} r_i^{st}) & \\
\text{s.t. } & e_{ij}^{st} \leq x_i P_i^{st} & \forall (i,j) \in \Edgeset, s\in\Scenarioset,t\in\Timeset \label{eq:sl-actexcl1}\\
             & e_{ij}^{st} \leq x_j C_j^{st} & \forall (i,j) \in \Edgeset, s\in\Scenarioset,t\in\Timeset \label{eq:sl-actexcl2}\\
& \sum_{j\in \Neighbourset_i} e_{ij}^{st} + r_i^{st} + C_i^{st} = \sum_{j\in\Neighbourset_i} e_{ji}^{st} + f_i^{st} + P_i^{st} & \forall i \in [n], s\in\Scenarioset,t\in\Timeset \label{eq:sl-kir} \\
& y_i^{st} \leq x_i Q^{st} & \forall i \in [n], s\in\Scenarioset, t\in\Timeset \label{eq:sl-csc1}\\
& y_i^{st} \leq r_i^{st} & \forall i \in [n], s\in\Scenarioset, t\in\Timeset \label{eq:sl-csc2}\\
& y_i^{st} \geq r_i^{st} - (1-x_i) Q^{st}  & \forall i \in [n], s\in\Scenarioset, t\in\Timeset \label{eq:sl-csc3}\\
& \sum_{i\in [n]} y_i^{st} \leq \sum_{i\in [n]} x_i (P_i^{st} - C_i^{st}) + M^{st} (1-z^{st}) & \forall s\in\Scenarioset, t\in\Timeset \label{eq:sl-csc4}\\ 
& y_i^{st} \leq M^{st} z^{st} & \forall i \in [n], \forall s\in\Scenarioset, t\in\Timeset \label{eq:sl-csc5}\\
& \sum_{j \in \Neighbourset_i} e_{ij}^{st} + r_i^{st} \leq B_i^{st} & \forall i \in [n], s\in\Scenarioset,t\in\Timeset \label{eq:sl-noncir}\\
& x_i + x_j \leq 1 & \forall (i,j) \notin \Edgeset &\label{eq:sl-conflictgraphcons} \\
& \sum_{i\in [n]} x_{i} P^{\text{inst}}_i \leq P^{\text{inst}}_{\text{leg}} \label{eq:sl-power}\\
& z^{st} = 1 \Leftrightarrow  \sum_{i\in [n]} x_i (P_i^{st} - C_i^{st}) \geq 0 & \forall s\in\Scenarioset,t\in\Timeset \label{eq:sl-csc6}\\
& x \in \left\{0,1\right\}^{n}, z\in \left\{0,1\right\}^{|\Scenarioset|\, |\Timeset|} \label{eq:sl-compactbinvars} & \\
& e, r, f, y \geq 0.&
\end{align}
\end{subequations}
$B_i^{st}$ is an upper bound on the maximum net production of actors, which can be set to (1) $\max \left\{P_i^{st} - C_i^{st}, 0\right\}$ (direct production of prosumer $i$) or (2) $P_{\mathrm{abs},i}^{st}$ (absolute production of prosumer $i$) depending on whether individual self-consumption is forced (1) or not (2). $Q^{st} = \sum_{i \in [n]} \max \left\{P_i^{st}-C_i^{st}, 0\right\}$ and  $M^{st} = \max\left\{\sum_{i \in [n]} P_i^{st}, \sum_{i \in [n]} C_i^{st}\right\}$ are big M constants used to ease the reading of the model. \rev{The model is built in a stochastic way using the set $\Scenarioset$ of scenarios. Each scenario has a probability of occurrence $p_{s}$ and its own $(R_i,F_i,C_i,P_i)$ parameters defined for each time step. Time steps are indexed by the set $\Timeset$.}

\subsubsection{Constraints from self-consumption applications}
\label{sec:sc-constraints}
To stay in line with concrete self-consumption cases, additional constraints can be added to the model. Different constraints can be set depending on the type of self-consumption that one chooses to apply, based on the following definitions:

\begin{definition}[Self-consumption]
    Self-consumption is defined at the scale of an actor. Self-consumed energy corresponds to energy that has been produced by an actor and which is consumed by this actor. Self-consumption of an actor does not imply that he takes part in a self-consumption loop. An actor $A_i$ maximises its self-consumption rate when $r_i+\sum_{j \in [n], j \neq i} e_{ij}$ is minimal.
\end{definition}

\begin{definition}[Collective self-consumption]
    Collective self-consumption is defined at the scale of a self-consumption loop. Collective self-consumed energy corresponds to the energy that has been produced by a member of the loop, and which is consumed within the loop. To do so, the produced energy can be consumed by the producer itself (self-consumption) or by another member of the loop. A loop maximises its collective self-consumption rate when $\sum_{i \in [n]} r_i$ is minimal.
\end{definition}

\paragraph{Individual self-consumption constraints}

Even though no legal requirement is considered concerning individual self-consumption, one forces the actors of a self-consumption loop to satisfy their consumption needs before exchanging energy with other actors or with the grid. To force the energy that is produced by an actor to be consumed by the same actor prosumers' production and consumption are expressed as net production and consumption, assuming that $e_{ii}$ are fixed either to $P_i$ if $P_i<C_i$ (net consumer) or to $C_i$ if $P_i>C_i$ (net producer). Individual self-consumption constraints are not related to whether or not an actor is a member of a collective self-consumption loop as even an excluded actor should consume a part of the energy they produce.

\paragraph{Collective self-consumption constraints}

The interest of participating in collective self-consumption is to favour locally produced electricity instead of grid exchanges. Each net producer must then exchange electricity with other loop members when the loop is globally in need of electricity. Similarly, net consumers must favour electricity produced inside the loop instead of electricity from the grid. In Model~\eqref{model:compact}, these constraints are expressed through Equations~\eqref{eq:sl-csc1} to \eqref{eq:sl-csc5} and equivalence \eqref{eq:sl-csc6}. \rev{To express these constraints, we introduce auxiliary variables $y_i^{st}$ and $z^{st}$, taking into account the loop priority over the grid only when the actor is part of the loop.}

\paragraph{Limiting energy circulation through actors}

The total amount of energy distributed by an actor (collective self-consumption + energy sold to the grid) should not exceed the amount of produced energy. In Model~\eqref{model:compact}, this constraint is expressed with equation \eqref{eq:sl-noncir}.
% \begin{equation}\label{eq:circulation-limitation-1}
% \sum_{k\in[n], k \neq i} e_{ik} + r_i \leq P_i, \forall i \in \{1, \ldots n\}.
% \end{equation}
Similarly to distributed energy, the total energy amount that is consumed by an actor should not exceed the actor's consumption. However, only one of the two sets of constraints is necessary for the complete problem as Kirchhoff constraints enforce the second one.

% \begin{equation*}\label{eq:circulation-limitation-2}
% \sum_{k\in [n],k \neq i} e_{ki} + f_i \leq C_i, \forall i \in \{1, \ldots n\}.
% \end{equation*}

\paragraph{Actor exclusion constraints}

When actor $i$ is excluded from the self-consumption loop, we have $x_i = 0$. As a consequence of its exclusion, an actor cannot exchange electricity with other actors. It means that $e_{ij}$ variables are subject to the value of $x_i$ and $x_j$ for each pair of actors, as depicted by Constraints~\eqref{eq:sl-actexcl1} and \eqref{eq:sl-actexcl2} of Model~\eqref{model:compact}.

% \paragraph{Predefined members}
% In some cases, some actors are predefined as members of the self-consumption loop and should not be excluded during the optimisation process. If \Membersset is defined as the set of indexes identifying actors that are predefined members of the self-consumption loop, the following constraints can be set to ensure that these actors will be part of the self-consumption loop along the optimisation:
% \begin{equation*}
%     x_m \ge 1, \forall m \in \Membersset.
% \end{equation*}
% Despite depicting a real aspect of network design problems, this constraint is not added to the complete model for clarity purposes.

\subsubsection{Maximal total installed power of a self-consumption loop}
\label{sec:power-constraint}
To ensure that a loop respects the legal installed power restriction presented previously, Constraint~\eqref{eq:sl-power} is set to prevent the global installed power from exceeding a legal criteria.
% \begin{equation}\label{eq:power}
%       \sum_{i \in [n]} x_iP_i^{\text{inst}} \le P_{\mathrm{leg}}^{\text{inst}}.
% \end{equation}

\subsubsection{Neighbourhood graph and distance restriction}

Legal distance constraints define a maximum geographical distance between two actors for them to be included in the same loop. These constraints can be represented in an undirected graph with all $n$ actors as nodes, and edges between them only if they are closer to each other than a legal distance. A loop respecting the constraints will therefore correspond to a clique in that graph.
These graphs correspond to intersection graphs of disks of identical diameters (equal to the maximum legal distance) in the plane and have been extensively studied. One noteworthy result is that maximal cliques in such graphs can be generated in polynomial time \citep{clark1990unit}.
Additionally, edges can be removed in preprocessing between two actors if the sum of the installed power production exceeds the allowed threshold (see \cref{sec:power-constraint}).

To cope with the legal restrictions concerning the geographical distance between each pair of actors participating in a self-consumption loop ($d_{ij}<d_{leg}$) equation \eqref{eq:sl-conflictgraphcons} refers to $\Edgeset$ to avoid non-neighbours to be part of the same collective self-consumption loop and define associated constraints.

\subsubsection{Uncertainty in the production and consumption profiles}

Renewable energy production and consumption are by nature uncertain and potentially highly correlated across actors and time steps. Furthermore, the contracts of certain actors with the grid may involve variations in the buying or selling prices, denoted as $F_i$ and $R_i$ respectively. These variations could be implemented through schemes such as time-of-use pricing or could involve uncertainties in prices, such as when actors participate in price-based demand response programs like peak shaving \citep{Paterakis2017,Albadi2008}. We therefore consider $(R_i^t,F_i^t,C_i^t,P_i^t)$ to be non-independent random vectors observed through a finite set of scenarios \Scenarioset with given associated probabilities $p_s$.

We model the problem as a two-stage stochastic program, with the loop membership decisions $x$ taken at the first stage, and the flow variables at the second stage. A scenario $s$ corresponds to a realisation of the uncertain parameters for all time steps, after which the loop operator can decide on the continuous flow variables $e, f, r$ in an operational stage. Importantly, the decisions taken at all time steps do not influence the state of the system, nor the decisions for the following time steps. This allows us to consider only these two design and operation stages, with the operation stage encompassing all time steps at once.

\subsubsection{Objective function}
\label{sec:obj-function}
In this optimisation problem, the objective is to find the best self-consumption loop from an economic point of view, that is to say, favouring exchanges between the grid and actors who have the most interesting prices  (high selling prices, low buying prices)\footnote{The objective function takes all actors into account, without considering whether they belong to the self-consumption loop. This is used to include the trivial case where no actor belongs to the self-consumption loop, which would result in an objective value computed on loop members only to be equal to zero.} Fixed inclusion costs ($T_i$ adhesion costs of each actor $i$), as well as the price of the electricity within the loop ($r$), are not considered in the objective function, as they will be determined once the geometry of the loop and exchange happening inside the loop are set. Fixing $T$ after this first optimisation step to reach equity between actors involved in the self-consumption loop is evoked as a perspective of this work (\cref{sec:conclusion}). Despite balancing the benefits of the actors is left for future work, the proposed model assures a maximal global benefit and a non-zero benefit for each actor involved in the self-consumption loop.

\subsection{Benders decomposition for larger time scales (SLExt)}
\label{sec:benders-single-loop-model}

We can notice that the model is structured in two parts, a choice of selected actors that form a feasible loop, and auxiliary binary variables on one hand, and a set of continuous flow decisions for each scenario and time step, which has a block-separable structure along these two dimensions.

We can use a Benders decomposition of the problem, expressing the flow optimality conditions as constraints on the $x$ and $z$ variables. Benders decomposition exploits the separation of variables between the binding variables which design the network and the continuous variables that are fully block-separable (see \cite{rahmaniani2017benders} for a recent literature review of the Benders decomposition, applications and algorithms).
We can note $\phi_{st}: \{0,1\}^n \times \{0,1\} \mapsto \mathbb{R}$ the optimal value function of the problem given fixed values of $x$ and $z^{st}$. Model~\eqref{model:compact} is then equivalent to \rev{Model}~\eqref{model:equivalentmodel}:
\begin{subequations}\label{model:equivalentmodel}
\begin{align}
\min_{x,z,\eta} \, &\sum_{s\in\Scenarioset} \sum_{t\in\Timeset} p_s \eta_{st} &\\
\mathrm{s.t. } \, & \eta_{st} \geq \phi_{st}(x,z^{st}) & \forall s \in \Scenarioset, t \in \Timeset\label{cons:optimalityprimalbenders}\\
& \eqref{eq:sl-conflictgraphcons}-\eqref{eq:sl-compactbinvars}
\end{align}
\end{subequations}
\rev{where $\phi_{st}(x,z^{st})$ is the solution of Model}~\eqref{model:continuousprojection} \rev{defined below}. At the optimal solution, the variable $\eta_{st}$ will match the value of $\phi_{st}(x,z)$. The value of $\phi$ corresponds to the optimisation problem once the binary variables are fixed.
This problem is entirely decomposed along $(s,t)$, we, therefore, omit the two indices for clarity.
The dual variable associated with each constraint is labelled on the right:
\begin{subequations}\label{model:continuousprojection}
\begin{align}
\phi(x,z) = \min_{e,f,r,y} \, & \sum_i F_i f_i - R_i r_i & \\
\mathrm{s.t. }\, & e_{ij} \leq x_i P_i & (\alpha_{ij}^p) \\
& e_{ij} \leq x_j C_j & (\alpha_{ij}^c) \\
& \sum_j e_{ij} + r_i - \sum_j e_{ji} - f_i = P_i - C_i & (\pi_i) \\
& y_i \leq x_i Q & (\gamma_i) \\
& y_i - r_i \leq  0 & (\theta_i) \\
& y_i - r_i \geq -(1-x_i) Q & (\kappa_i) \\
& \sum_i y_i \leq \sum_i x_i (P_i - C_i) + M (1-z) & (\nu) \\
& y_i \leq M z & (\lambda_i) \\
& \sum_j e_{ij} + r_i \leq B_i & (\tau_i)\\
& e,f,r,y \geq 0.
\end{align}
\end{subequations}

Our problem is always bounded since the energy traded within the grid is bounded by the productions, and is always feasible, since there always exists a baseline solution in which all actors trade with the grid only \footnote{The dual of Model~\eqref{model:continuousprojection} is provided in~\cref{app:dual_benders} (Model ~\eqref{model:dualcontinuousprojection}).}. This implies that $\phi_{st}(x,z)$ is finite for all $x$ and $z$ and that the primal and dual problems attain the same optimal value by the strong duality of linear problems. The feasible region of the dual Model being independent of $x$ and $z$, we can replace Constraint~\eqref{cons:optimalityprimalbenders} with:
\begin{align*}
    \tag{B-CUT} & \eta_{st} \geq \\
    &\;\; \sum_{i,j\in\Edgeset} (\alpha_{ij}^p x_i P_i + \alpha_{ij}^c x_j C_j) \\
    &\;\; + \sum_i -(1-x_i) M \kappa_i + (P_i - C_i) \pi_i + M z \lambda_i + B_i \tau_i + \gamma_i x_i Q \\
    &\;\; + \nu (\sum_i x_i (P_i - C_i) + (1-z) M) \,\,\,\forall (\alpha^p, \alpha^c,\pi,\gamma,\theta,\kappa,\nu,\lambda,\tau) \in \Vertexset_{st}
\end{align*}
where $\Vertexset_{st}$ is the set of vertices of the dual feasible set. The cardinality of $\Vertexset_{st}$ grows exponentially with $n$, prohibiting the addition of all inequalities in (B-CUT), and motivating their generation on the fly to separate candidate solutions.

\subsection{Introduction of multiple loops (MLCpct)}

In this section, we extend the proposed model from a single loop in which actors can be included or not to multiple loops defined on the fly. This extension will be crucial to scale the proposed model in space, i.e.~allowing the loop operator to build and operate multiple loops defined dynamically across a broad area and considering many actors simultaneously.

We define a compact representation of the model\footnote{The full Model~\eqref{model:multiloop-cpct} can be found in~\ref{app:multiloop}} and propose an extended formulation based on a Dantzig-Wolfe decomposition. We claim the extended formulation can be solved directly and not through a branch-and-price algorithm, with a number of variables remaining small enough as the number of actors increases, and support that claim with the characteristics of potentially optimal loops.

The number of non-trivial loops is bounded above by $n/2$, we will denote $l \in \Compactloopset$ the loop indices. The loop membership variables $x_{i}^{l}$ are indexed by the loop and takes value $1$ when actor $i$ is assigned to loop $l$. Aside of $x_{i}^{l}$ variables, some of the constraints defined in \cref{sec:single-loop-model} evolve as soon as multiple loops are considered. Exclusion constraints (Constraints~\eqref{eq:sl-actexcl1} and \eqref{eq:sl-actexcl2}) should be rewritten as follows, to ensure that two actors not being members of the same loop cannot exchange electricity:

\begin{equation}\label{eq:multi-loop-actors-exclusion}
        e_{ij} \le \sum_{l \in \Compactloopset}w_{ij}^{l}D_{i,j}, \forall (i,j) \in \{1, \ldots n\}^2, i \neq j,
\end{equation}
where $D_{i,j} = min(P_i, C_j)$. To define this constraint, we introduced $w_{ij}^{l}$ which are binary variables linearising $x_{i}^{l} x_{j}^{l}$, expressing the membership of both actors $i$ and $j$ to loop $l$ and constraining the flow through equation \eqref{eq:multi-loop-actors-exclusion}. These $w_{ij}^{l}$ variables are subject to:
\begin{align*}
        & w_{ij}^l \le x_i^l, \forall (i,j) \in \{1, \ldots n\}^2, i \neq j,l \in \Compactloopset \\
        & w_{ij}^l \le x_j^l, \forall (i,j) \in \{1, \ldots n\}^2, i \neq j,l \in \Compactloopset.
\end{align*}

Legal restriction constraints, collective self-consumption constraints and circulation constraint remain unchanged, and are now indexed by the loop $l$.
Additionally, big M constraints are redefined to allow the $y_i^l$ variables to take any value when actor $i$ is included in loop $l$.

 \subsection{Dantzig-Wolfe decomposition (MLCol)}

The previous multiple loop formulation is amenable to a Dantzig-Wolfe decomposition exploiting the structure of solutions to the knapsack constraint~\eqref{eq:knapsackmultiloop} under a loop maximality assumption.

\begin{assumption}[Loop maximality]\label{assum:maximality}
An optimal solution exists such that all created loops are maximal, in the sense that for any actor $i$ not included in any loop, it could not be added to any of them without removing another actor from the loop.
\end{assumption}

\begin{proposition}\label{prop:maximality}
\cref{assum:maximality} holds for multiloop compact model ~\eqref{model:multiloop-cpct}.
\end{proposition}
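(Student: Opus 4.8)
The plan is to prove \cref{assum:maximality} by an exchange argument: starting from any optimal solution of Model~\eqref{model:multiloop-cpct}, I would repeatedly move free actors into existing loops whenever the combinatorial constraints allow it, showing that each such move preserves feasibility without increasing the objective, until no further actor can be added. An optimal solution exists to start from, since the model is feasible and bounded (an all-grid solution always exists). Because every actor lies in at most one loop by \eqref{eq:ml-oneloop}, the total number of memberships $\sum_{i,l} x_i^l$ is bounded by $n$; as this count strictly increases at each step, the process terminates at an optimal solution in which every loop is maximal.

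The core of the argument is a single exchange step. Fix an optimal solution and suppose some loop $l_0$ is not maximal, i.e.\ there is a free actor $i_0$ (with $\sum_{l} x_{i_0}^l = 0$) whose addition to $l_0$ violates neither the clique/distance constraint \eqref{eq:ml-distance-constraint} nor the installed-power constraint \eqref{eq:knapsackmultiloop}. I would set $x_{i_0}^{l_0}=1$ and build a feasible flow for the enlarged configuration. The clean case is when $i_0$ is a net producer and $l_0$ remains in surplus: I would keep every flow variable unchanged, let $i_0$ keep selling its surplus to the grid, set $y_{l_0 i_0}=r_{i_0}$, and leave $z_{l_0}=1$. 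The loop surplus $\sum_i x_i^{l_0}(P_i^{st}-C_i^{st})$ only grows, so the collective self-consumption constraints \eqref{eq:scl-1}--\eqref{eq:scl-5} together with the equivalence \eqref{eq:scl-6} stay satisfied; the linking constraints \eqref{eq:wij1}--\eqref{eq:wij2} are met by taking the associated $w$ variables minimal, \eqref{eq:ml-oneloop} holds because $i_0$ was free, and the objective is unchanged because no flow moved. Since the addition may make $l_0$ larger than lower-indexed loops, I would finally re-sort the loop labels---a symmetry of the problem---to restore the ordering \eqref{eq:symbreak}.

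The hard part will be the remaining cases, where $i_0$ is a net consumer or $l_0$ is (or becomes) in deficit. Here keeping the old flows is not feasible: once $i_0$ joins $l_0$, the forced equality $y_{l_0 i_0}=r_{i_0}$ and the deficit switch \eqref{eq:scl-5}--\eqref{eq:scl-6} push $z_{l_0}$ and the aggregate balance of \eqref{eq:scl-1}--\eqref{eq:scl-5} so that some electricity currently sold to the grid by members of $l_0$ must instead be routed internally through \eqref{eq:consflowmultiloop} to cover $i_0$'s demand. I would construct the new flow by redirecting exactly this quantity from the members' grid sales $r_j$ onto intra-loop edges $e_{j i_0}$, correspondingly lowering $i_0$'s grid purchase $f_{i_0}$, and then recomputing $y_{l_0\cdot}$ and $z_{l_0}$. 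The obstacle is to certify that this rerouting does not increase the objective: replacing a unit sold at $R_j$ and bought at $F_{i_0}$ by an internal transfer changes the cost by $R_j-F_{i_0}$, so the step is objective-non-increasing precisely when intra-loop exchange is weakly preferable to grid trading for the actors involved. I expect this to be the crux, requiring either the pricing structure of \cref{assum:external-price} to guarantee $R_j \le F_{i_0}$ along every rerouted edge, or an appeal to optimality ensuring that an optimal solution can be chosen in which no such unfavourable rerouting is ever forced.
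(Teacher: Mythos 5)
Your augmentation scheme is the same one the paper uses, and your ``clean case'' is in fact the paper's \emph{entire} proof: the paper adds the free actor to the loop ``without contributing to internal flows,'' keeps all grid trades as they were, and declares the objective unchanged, with no case analysis at all. Your diagnosis that this trivial extension can be infeasible is correct, and it exposes a real gap in the paper's own argument: constraints \eqref{eq:scl-1}--\eqref{eq:scl-5} force $y_{li}=r_i$ for every member, so when the enlarged loop is in deficit ($z_l=0$ via \eqref{eq:scl-6}) every member must have $r_i=0$, and when it is in surplus the members' joint sales are capped by the (now reduced) loop surplus; hence a net producer cannot enter a deficit loop, nor a net consumer a surplus loop, with all flows kept unchanged. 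One of your ``hard'' cases is actually clean, however: a net consumer joining a loop that \emph{remains} in deficit poses no difficulty, because \eqref{eq:scl-1}--\eqref{eq:scl-5} restrict only sales $r_i$, never purchases $f_i$, so the newcomer can keep buying its whole net consumption from the grid and nothing forces internal routing toward it.

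The crux you leave open is genuine, and closing it needs \emph{both} ingredients you mention, not one or the other. The rerouted solution you sketch is feasible, and its cost differs from the original optimum by terms of the form $(R-F)\delta$ on the rerouted quantities. If selling prices never exceed buying prices --- a direction the paper relies on (cf.\ ``under the assumption that surplus selling prices are lower than electricity costs'' in \cref{sec:results}) but which \cref{assum:external-price} does not actually provide --- then the rerouting is weakly cost-decreasing; since the starting solution is optimal, the change also cannot be strictly negative, hence it is zero and the augmented solution is again optimal, after which your counting argument and a relabelling of loops to restore \eqref{eq:symbreak} finish the induction. Optimality alone, your alternative escape route, does not work: without the pricing direction the proposition is simply false for Model~\eqref{model:multiloop-cpct}. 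Concretely, take a two-member deficit loop (producer $1$, consumer $2$, with $R_1<F_2$ so the loop is worth forming, and $p_1+p_3<c_2$) and a nearby free producer $3$ with $R_3>F_2$, all within legal distance and power limits: adding actor $3$ forces $r_3=0$ and internal routing of its output, increasing the cost by $(R_3-F_2)$ per unit, so \emph{every} optimal solution leaves actor $3$ outside even though it is combinatorially addable. Such prices are even admissible under the paper's own tariff tables (selling at $13.39$ versus summer off-peak professional buying at $7.58$ cents per kWh). So to complete your proof you must state the pricing hypothesis explicitly and then run the optimality argument; as written, both your attempt and the paper's one-line proof stop short of this.
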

\begin{proof}
Given a non-maximal solution, let us assume there exists an actor that could be included in a loop without removing another one. We can construct a solution in which this actor does not receive any net consumed power from the grid and sells any net produced power to the grid, thus not modifying the objective.
\end{proof}

\Cref{prop:maximality} holds because of the feasibility of a trivial solution where an actor could be included in a loop without contributing to internal flows. In practice, such an actor could be removed from the loop in a post-processing step without degrading the objective. This leads us to observe that an optimal solution can always be constructed by selecting a subset of maximal loops which do not overlap on any actor.

The number of potential loops could grow exponentially with the number of actors, and would therefore require a column generation approach to build new loops. We will rely on a spatial sparsity assumption, together with the optimality of maximal loops to motivate an extended formulation that eagerly generates all extended columns before branching.

Finding maximal loops corresponds to finding cliques in the graph of all actors with an edge between $i$ and $j$ if $d_{ij} \leq d_{\text{leg}}$. These cliques are not maximal since they also need to satisfy the installed capacity constraint.

Furthermore, each selected clique needs to contain at least one producer and one consumer to generate some inner flows, further reducing the number of potential clique candidates. Since there will typically be few producers compared to the number of consumers, we can construct cliques around each of the producers only. The procedure we propose is:
\begin{enumerate}
    \item Generate all maximal cliques in the graph, starting net producers only,
    \item If a clique respects the installed capacity, it can be used directly,
    \item Otherwise, enumerate all maximal vertices of the 0-1 knapsack polytope selecting the producers of the clique, subject to the capacity constraint. Each vertex results in a combination of producers, and all consumers of the clique can be added to the loop.
\end{enumerate}

We can adapt the clique generation algorithm to dynamically forbid adding nodes that would exceed the capacity, or generate all maximal loops from every clique by excluding actors until the capacity constraint is respected.

The extended model is described by a set $\Extendedloopset$ of maximal loops and binary variables $v_h$ capturing whether a loop is taken, with parameters $a_{i}^{h}$ being equal to $1$ if loop $h$ contains actor $i$. Furthermore, additional application-related constraints can directly be included in the loop, including, for instance, the fact that two coupled actors $(i,j) \in \Directset$ are both present or both absent from a loop, \rev{$\Directset$ being the set of coupled actors}. The extended formulation of the multiloop model~\eqref{model:multiloop-cpct} is then essentially captured by the new constraints:
\begin{subequations}
\begin{align}
& \sum_{h\in\Extendedloopset} a_i^h v_h \leq 1& \forall i\in[n]\label{eq:packingextended}\\
& e_{ij}^{st} \leq D_{ij}^{st} \sum_{h\in\Extendedloopset} v_h a_i^h a_j^h  & \forall (i,j) \in \Edgeset, s\in\Scenarioset, t\in\Timeset. \label{eq:edgeblocker}
\end{align}
\end{subequations}

Constraint~\eqref{eq:packingextended} of the extended model corresponds to Constraint~\eqref{eq:ml-oneloop} in the compact one, i.e. each actor is at most in one loop. Constraint~\eqref{eq:edgeblocker} replaces the $w$ variables and associated auxiliary constraints of the compact model, i.e. two actors can share electricity only if they belong to the same loop. The full decomposed model is provided in~\ref{app:multiloop-danzig-wolfe}.\\

\subsection{Benders decomposition of MLCol model (MLColExt)}

Similarly to the decomposition operated on the single loop model in \cref{sec:benders-single-loop-model}, the extended version of the problem can be split into two subproblems, that can be efficiently solved using Benders' algorithm. As previously, the main problem solves the design aspect of the network, by setting the loops that should be selected for the collective self-consumption operation. One can keep the notation $\phi_{st}: \{0,1\}^n \times \{0,1\} \mapsto \mathbb{R}$ the optimal value function of the problem given fixed values of $v,\eta_{st}$ to define this main model:
\begin{subequations}
\begin{align}
\min_{v,\eta} \, & \sum_{s\in\Scenarioset} \sum_{t\in\Timeset} p_s \eta_{st} & \\
\mathrm{s.t. } \, & \eta_{st} \geq \phi_{st}(v) & \forall s \in \Scenarioset, t \in \Timeset\label{cons:optimalityprimalbendersExtended}\\
& \sum_{h\in\Extendedloopset} a_i^h v_h \leq 1 & \forall i \in [n]\\
& v \in \{0,1\}^{|\Extendedloopset|}.
\end{align}
\end{subequations}

The second problem, using solutions of the main problem as parameters, is then a continuous problem whose resolution should be possible in a reasonable amount of time as no binary variables are involved. \rev{In this formulation, we define $\Excessset_{st}$ as the set of loops having a positive net production for a given $(s,t)$ tuple.} We drop the indices $s$ and $t$ from the subproblem below, as it is fully separable along $\Scenarioset$ and $\Timeset$. The dual variable associated with each constraint is labelled on the right.
\begin{subequations}\label{prob:subproblemExtended}
\begin{align}
\min_{e,r,f} & \sum_{i\in [n]}\, (F_i f_i - R_i r_i ) & & \\
\text{s.t. } & e_{ij} \leq D_{ij} \sum_{h\in\Extendedloopset} v_h a_i^h a_j^h & \forall i \in [n]\;\;& (\alpha_{ij}) \\
& \sum_{j\in\Neighbourset_i} e_{ij} + r_i - f_i - \sum_{j\in\Neighbourset_i} e_{ji} = P_i - C_i & \forall i \in [n]\;\; & (\pi_i) \\
& r_i \leq (1-\sum_{h\notin\Excessset} v_h a_i^h) Q & \forall i \in [n]\;\; & (\gamma_i) \\
& \sum_{i\in[n]} r_i a_i^h \leq \sum_{i\in [n]} a_i^h v_h (P_i - C_i) + Q (1-v_h) & \forall i \in [n],h\in \Excessset\;\; & (\nu_h) \\
& \sum_{j\in \Neighbourset_i} e_{ij} + r_i \leq P_{i} & \forall i \in [n]\;\; & (\tau_i) \\
& e, r, f \geq 0.
\end{align}
\end{subequations}

\subsubsection{Dual and cuts}
Dualisation of Problem~\eqref{prob:subproblemExtended} gives the following model, whose objective value provides cuts that can be applied to the main model to refine the chosen design of collective self-consumption loops:
\begin{align*}
\max_{\alpha,\pi,\gamma,\kappa,\nu,\tau} \, & \;\; \sum_{i,j\in\Edgeset} \alpha_{ij} D_{ij} \sum_{h\in\Extendedloopset} v_h a_i^h a_j^h + &\nonumber\\
    &\;\; + \sum_{i\in[n]}\big( (P_i - C_i) \pi_i + P_i \tau_i + \gamma_i (1-\sum_{h\notin\Excessset} a_i^h v_h) Q \big) &\nonumber \\
    &\;\; + \sum_{h\in\Excessset} \nu_h \Big( \sum_{i\in[n]} \big( a_i^h v_h (P_i - C_i) \big) + Q (1-v_h) \Big) &\label{eq:dualobjective2} \\
\mathrm{s.t. }\,\, & \alpha_{ij} + \pi_i + \tau_i \leq 0 && (e_{ij})  \\
                   & \pi_i + \gamma_i + \tau_i + \sum_{h\in\Excessset} a_i^h \nu_h \leq -R_i && (r_{i})  \\
                   & -\pi_i \leq F_i && (f_{i})  \\
& \alpha, \gamma, \nu, \tau \leq 0, \kappa \geq 0.
\end{align*}
These dual-induced cuts can be expressed as follows:
\begin{align*}
    \tag{EXT-B-CUT} & \eta_{st} \geq \\
    &\;\; \sum_{i,j\in\Edgeset} \alpha_{ij} D_{ij} \sum_{h\in\Extendedloopset} v_h a_i^h a_j^h + &\nonumber\\
    &\;\; + \sum_{i\in[n]}\big( (P_i - C_i) \pi_i + P_i \tau_i + \gamma_i (1-\sum_{h\notin\Excessset} a_i^h v_h) Q \big) &\nonumber \\
    &\;\; + \sum_{h\in\Excessset} \nu_h \Big( \sum_{i\in[n]} \big( a_i^h v_h (P_i - C_i) \big) + Q (1-v_h) \Big) \,\,\,\forall (\alpha,\pi,\gamma,\kappa,\nu,\lambda,\tau) \in \Vertexset_{st}.
\end{align*}

\section{Application of the models}
\label{sec:models-application}
This section details the data and assumptions used to set up realistic instances and their corresponding consumption and production profiles. The method to generate these instances is designed to mimic real-case scenarios. These instances are then reused in a test protocol on operational-specific metrics, which is described in \cref{sec:test_protocol}.

\subsection{Generation of realistic instances}
\label{sec:case-study}
To apply the proposed model to realistic instances, localization, costs, production, and consumption data have been generated. Each actor is then represented as a prosumer, capable of both producing and consuming electricity.

\subsubsection{Consumption load}
\label{sec:consumption-load}
Consumption profiles are estimated from reference datasets extracted from the Enedis\footnote{Enedis is a state-owned limited company managing and developing the electricity distribution network in France.} Open Data platform \citep{enedis-open-data}. Consumption loads are described with a 30-minute time step over a year (2022 is taken as a reference).

To generate a group of consumers, we divided this group into two consumption sub-groups, namely "Household" and "Pro: Professional." These categories echo the separation made by the Enedis Open Data platform, which provides an average profile of each category. A distribution of consumers in each category is manually determined based on the scenario chosen for simulation (see \cref{sec:test_protocol}). All consumers in each category are then provided with a consumption profile that is normally distributed around the Enedis reference profile load corresponding to that category. To do so, a rolling window mean using a 3 steps window size is applied. Some variations are added to this averaged consumption using a uniform random variable and a variation factor of 30\%.

\subsubsection{Installations characteristics and production volume computing}
Determining the actual production of an installation is complex as it depends on several parameters, some of which are meteorological. To calculate discretised production, irradiance data is computed based on solar position, installed power of the installation, inclination, and orientation. Two exposition configurations are considered: 
\begin{itemize}
    \item Worst exposition (WC): Tilt angle fixed to 60° and panels facing North (0° azimuth)
    \item Best exposition (BC):  Tilt angle fixed to 30° and panels facing South (180° azimuth)
\end{itemize}

To avoid longer computation runtimes, solar position information is computed on 10 km² surface tiles and reused identically for each installation associated with this tile. This assumes that solar position variation along such a surface has a low impact on the installation production. Based on the installed power assigned to each producer, the production is generated for each time step using PVGIS-computed irradiance data.

\subsubsection{Electricity pricing}
\label{sec:pricing}

The objective function defined in every proposed problem is subject to electricity pricing conditions. \cref{tab:selling} and \cref{tab:buying} report official buying costs \citep{buying-cost} and French Energy Regulatory Commission selling prices \citep{cre-selling} of the electricity for actors engaged in a self-consumption operation.

\begin{table}[ht]
  \centering
  \caption{Electricity selling prices considering self-consumption installed power \citep{cre-selling}}
    \adjustbox{max width=\textwidth}{
    \begin{footnotesize}
    % Table generated by Excel2LaTeX from sheet 'Tarifs'
\begin{tabular}{|llllll|}
\hline
Installed power (kWc)  & $\le$ 3kWc & $\le$ 3-9kWc & 9-36kWc                & $\ge$ 36kWc & $\ge$ 100kWc \bigstrut\\
\hline
Selling price (c€/kWh)       & 13.39                  & 13.39                  & 14.58                  & 12.68                  & 13.12 \bigstrut\\
\hline
\end{tabular}%

    \end{footnotesize}
}%
  \label{tab:selling}%
\end{table}%

\begin{table}[ht]
  \centering
  \caption{Electricity buying cost considering the client consumption subscription profile  \citep{buying-cost}}
    \adjustbox{max width=\textwidth}{
    \begin{footnotesize}
    % Table generated by Excel2LaTeX from sheet 'Tarifs'
\begin{tabular}{|lllll|}
\hline
\multicolumn{1}{|c}{\multirow{2}[4]{*}{Category}} & \multicolumn{2}{c}{$\le$ 36 kVA}   & \multicolumn{2}{c|}{$\ge$ 36kVA} \bigstrut\\
\cline{2-5}                       & Household              & Pro                    & \multicolumn{2}{c|}{Pro} \bigstrut\\
\hline
Period                 & \multicolumn{2}{c}{All year}                    & Winter                 & \multicolumn{1}{c|}{Summer} \bigstrut\\
\hline
Peak hours cost (c€/kWh)             & 20.4                   & 19.84                  & 27.26                  & 13.63 \bigstrut\\
\hline
Off-peak hours cost (c€/kWh)         & 15.13                  & 16.07                  & 15.16                  & 7.58 \bigstrut\\
\hline
\end{tabular}%

    \end{footnotesize}
}%
  \label{tab:buying}%
\end{table}%

\subsubsection{Geographical distribution}
One of the characteristics of the proposed model is its geographical dimension. To construct representative use cases, two types of geographical distributions have been defined:

\begin{itemize}
    \item \textbf{Uniform distribution}: In this distribution, each installation is located uniformly, maintaining a global density of actors. This random distribution is achieved by utilizing a combination of uniform distributions along latitudes and longitudes.
    \item \textbf{Clustered distribution}: This distribution is used to allocate groups of installations, imitating how industrial installations are distributed in semi-rural territories. In our case, clusters are formed having between 4 and 6 actors each.
\end{itemize}

In the following generated instances, these distributions are consistently utilized to sample a group of actors with a global density parameter. Multiple distinct densities are employed to assess the impact of this parameter on discovered solutions.

\subsection{Test protocol}
\label{sec:test_protocol}
The following test protocol has been established to validate the growing complexity of the defined optimisation problem. Two parameters are initially observed, namely, time horizon and the number of actors as they influence the initial number of variables defined in the problem. Other parameters of the initial configuration (named \textit{reference}) are detailed in \cref{tab:plan}, which also describes other instances that are used to compare optimisation results for different configurations.

\begin{table}[ht]
  \centering
  \caption{List of instances and their characteristics}
    \adjustbox{max width=\textwidth}{
    \begin{footnotesize}
    % Table generated by Excel2LaTeX from sheet 'Plan'
\begin{tabular}{|llllll|}
\hline
Scenario               & Orientation            & Tilt                   & Density                & Installed power limit & Period \bigstrut\\
\hline
reference              & 180 (South)            & 30$^\circ$                     & 0.5 pros/km2           & 3 MWc & Winter \bigstrut\\
\hline
dens\_0\_1             & 180 (South)            & 30$^\circ$                     & 0.1 pros/km2           & 3 MWc & Winter \bigstrut\\
\hline
dens\_2                & 180 (South)            & 30$^\circ$                     & 2 pros/km2             & 3 MWc & Winter \bigstrut\\
\hline
period\_sum            & 180 (South)            & 30$^\circ$                     & 0.5 pros/km2           & 3 MWc & Summer \bigstrut\\
\hline
power\_1000            & 180 (South)            & 30$^\circ$                     & 0.5 pros/km2           & 1 MWc & Winter \bigstrut\\
\hline
power\_2000            & 180 (South)            & 30$^\circ$                     & 0.5 pros/km2           & 2 MWc & Winter \bigstrut\\
\hline
power\_5000            & 180 (South)            & 30$^\circ$                     & 0.5 pros/km2           & 5 MWc & Winter \bigstrut\\
\hline
config\_wc             & 0 (North)              & 60$^\circ$                     & 0.5 pros/km2           & 3 MWc & Winter \bigstrut\\
\hline
\end{tabular}%

    \end{footnotesize}
}%
  \label{tab:plan}%
\end{table}%

Based on Enedis consumption profiles, three prosumer profiles are defined, namely \textit{Household}, \textit{Pro1} and \textit{Pro2} whose characteristics are described in \cref{tab:profiles_test}. From these profiles, a scenario has been defined, representing a distribution of the consumption profiles among generated actors (rate column). The objective of the defined scenario is to represent both the small producers that can be either dwellings (Household profile) or professional actors (Pro1 profile) and larger producers that are essentially professional actors (Pro2 profiles). \rev{In France, the number of photovoltaic installations has reached over 760,000, mainly represented by individual households (80\% of installations are under 6kW) }\citep{numb-selfcons}\rev{. However, since self-consumption often involves professional actors as well }\citep{numb-selfcons}\rev{, our instances balance this distribution by attributing more representative weights to larger prosumers.}

\begin{table}[ht]
  \centering
  \caption{Description of the prosumer profiles used in the base configuration}
  \begin{footnotesize}
      
    \adjustbox{max width=\textwidth}{
    % Table generated by Excel2LaTeX from sheet 'Base'
\begin{tabular}{|l|l|c|c|}
\hline
Profile                & Reference consumption profile & Installed power range & Rate \bigstrut\\
\hline
Household              & \cellcolor[rgb]{ .867,  .922,  .969}Household (P $\le$ 36kVA) & \cellcolor[rgb]{ .867,  .922,  .969}0kW $\le$ P $\le$ 6kW & \cellcolor[rgb]{ .388,  .745,  .482}40\% \bigstrut\\
\hline
Pro1                   & \cellcolor[rgb]{ .867,  .922,  .969}Pro (P $\le$ 36kVA)   & \cellcolor[rgb]{ .867,  .922,  .969}6kW $\le$ P $\le$ 12kW & \cellcolor[rgb]{ .388,  .745,  .482}40\% \bigstrut\\
\hline
Pro2                   & \cellcolor[rgb]{ .867,  .922,  .969}Pro (P $\le$ 36kVA)   & \cellcolor[rgb]{ .608,  .761,  .902}1MW $\le$ P $\le$ 3MW & \cellcolor[rgb]{ .89,  .906,  .588}20\% \bigstrut\\
\hline
\end{tabular}%

    }
  \end{footnotesize}
  \label{tab:profiles_test}%
\end{table}%

Instances involving 10 actors and having the same configuration as the reference instance presented in \cref{tab:plan} are also solved for different time horizons to evaluate how the models behave in solving larger problems. With the same objective, instances having several number of prosumers and a time horizon of 7 days are solved.

\cref{tab:instance_desc} indicates the different sizes - in terms of number of variables and constraints for all instances averaged on 100 replicates for each of them.
 
\begin{table}[ht]
  \centering
  \caption{Instances sizes (average computed on 100 replicates for each configuration and model) \rev{expressed in terms of number of constraints, total number of variables and number of instance variables. MLCol : Generated cliques multiloop model - SLCpct : Single loop compact model - MLCpct : Multiloop compact model}}
  \begin{footnotesize}
      
    \adjustbox{max width=\textwidth}{
    \begin{tabular}{l|lll|lll|lll}
\toprule
 & \multicolumn{3}{c|}{Constraints ($\times10^3$)} & \multicolumn{3}{c|}{Variables ($\times10^3$)} & \multicolumn{3}{c}{Integer variables} \\
Configuration & MLCol & MLCpct & SLCpct & MLCol & MLCpct & SLCpct & MLCol & MLCpct & SLCpct \\
\midrule
reference & 332 & 470 & 243 & 111 & 201 & 121 & 1198 & 6714 & 735 \\
dens\_0\_1 & 154 & 445 & 184 & 80 & 173 & 92 & 375 & 6529 & 735 \\
dens\_2 & 574 & 521 & 335 & 156 & 249 & 167 & 2168 & 7367 & 735 \\
period\_sum & 441 & 475 & 243 & 111 & 203 & 121 & 1198 & 6856 & 735 \\
power\_1000 & 127 & 475 & 243 & 110 & 203 & 121 & 831 & 6856 & 735 \\
power\_5000 & 431 & 475 & 243 & 111 & 203 & 121 & 1541 & 6856 & 735 \\
config\_wc & 226 & 475 & 243 & 110 & 203 & 121 & 1962 & 6856 & 735 \\
10\_act\_1\_d & 5 & 8 & 5 & 2 & 4 & 2 & 386 & 500 & 34 \\
10\_act\_10\_d & 45 & 73 & 47 & 21 & 34 & 23 & 386 & 1580 & 250 \\
10\_act\_14\_d & 63 & 102 & 66 & 29 & 48 & 33 & 386 & 2060 & 346 \\
10\_act\_30\_d & 138 & 218 & 140 & 62 & 102 & 70 & 386 & 3980 & 730 \\
10\_act\_60\_d & 284 & 434 & 281 & 124 & 204 & 140 & 386 & 7580 & 1450 \\
10\_act\_90\_d & 442 & 651 & 421 & 186 & 305 & 210 & 386 & 11180 & 2170 \\
10\_act\_180\_d & - & 1301 & 843 & - & 610 & 419 & - & 21980 & 4330 \\
10\_act\_270\_d & - & 1952 & 1264 & - & 914 & 629 & - & 32780 & 6490 \\
10\_act\_360\_d & - & 2602 & 1686 & - & 1219 & 839 & - & 43580 & 8650 \\
10\_act\_7\_d & 31 & 51 & 33 & 15 & 24 & 16 & 386 & 1220 & 178 \\
15\_act\_7\_d & 38 & 106 & 44 & 20 & 41 & 22 & 421 & 2137 & 183 \\
20\_act\_7\_d & 39 & 166 & 54 & 24 & 60 & 27 & 339 & 2860 & 188 \\
30\_act\_7\_d & 56 & 354 & 80 & 35 & 115 & 40 & 462 & 5165 & 198 \\
\bottomrule
\end{tabular}
    }
  \end{footnotesize}
  \label{tab:instance_desc}%
\end{table}%

% FamousBlueRaincoat

\subsection{Evaluation metrics}

Each model presented in \cref{sec:optimisation-models} is applied to generated instances. Aside from objective value and solving time, results are analysed through several evaluation metrics defined below. These \rev{key performance indicators} are only used for a post optimisation analysis of the loops, as benefit balance within the loops is discussed but left for future work.

\subsubsection{Collective self-consumption and self-production rates}
Self-consumption rate is defined as the ratio between the energy that is consumed within the collective self-consumption loop and the total amount of produced electricity within the loop. Self-production rate is defined as the ratio between the energy that is consumed within the collective self-consumption loop and the total amount of consumed electricity within the loop.

\subsubsection{Number of proposed loops}
The number of proposed loops indicates the number of collective self-consumption loops that have been formed and is computed using the decision variables' obtained values: 
\begin{subequations}\label{eq:number_of_loops}
\begin{align}
& & N_{\text{loop}} = \sum_{h\in\Extendedloopset} v_h & & \text{for extended models} & \\
& & N_{\text{loop}} = \sum_{l\in\Compactloopset} \delta_l & & \text{for compact models}, &
\end{align}
\end{subequations}
where $\delta_l$ is defined as follows:
\begin{align*}
\delta_l = \begin{cases} 1 & \text{if $\sum_{i \in [n]} x_{i}^{l} > 0$ (at least one prosumer selected in the loop)} \\ 0 & \text{otherwise.}\end{cases}
\end{align*}

For clarity in the definition of later indicators, we only consider the case of extended multiloop (MLColExt) model only. Nevertheless, the same indicators can also be built for compact multiloop models, indexing $x_i^l$ with $l\in\Compactloopset$ and $i\in [n]$ instead of $v_ha_i^h$ ($h\in\Extendedloopset$).

\subsubsection{Fairness indicators}
\label{sec:equity-indicator}
Two indicators representing the disparity of the benefit distribution are defined. The highest benefit indicator \eqref{eq:equity-1} underlines the highest benefit realised among all actors reaching a collective self-consumption community. This benefit is computed relative to a situation where no loop has been defined (zero loop configuration). Conversely, the lowest benefit indicator \eqref{eq:equity-2} underlines the lowest benefit realised among all actors proposed to reach a collective self-consumption loop.
\begin{subequations}\label{eq:equity}
\begin{align}
& \text{Highest benefit:}& & \overline{b} = \max_{i\in [n]}\, (\sum_{s\in\Scenarioset} \sum_{t\in\Timeset} p_s\, (F_i^{st} (f_i^{st}-C_i^{st}) - R_i^{st} (r_i^{st}-P_i^{st}))) \label{eq:equity-1}\\
& \text{Lowest benefit:}& & \underline{b} = \min_{i\in [n]}\, (\sum_{s\in\Scenarioset} \sum_{t\in\Timeset} p_s\, (F_i^{st} (f_i^{st}-C_i^{st}) - R_i^{st} (r_i^{st}-P_i^{st}))). \label{eq:equity-2}
\end{align}
\end{subequations}

\subsubsection{Average number of members and installed power per loop}

The number of created loops can be put into perspective with the number of actors that are not participating in any loop, by computing the average number of actors per loop :
\begin{equation*}
N_{\text{act\_mean}} = \frac{1}{N_{\text{loop}}} \sum_{h\in\Extendedloopset} \sum_{i \in [n]} v_h a_i^h
\end{equation*}

Another indicator is the average installed power per loop, which underlines how complete loops are in terms of installed power:
\begin{equation*}
P_{\text{mean}} = \frac{1}{N_{\text{loop}}} \sum_{h\in\Extendedloopset} \sum_{i \in [n]} v_h a_i^h P_i^{\text{inst}}.
\end{equation*}
This indicator should always be analysed in relation to the legal installed power limit set initially to solve the problem.

\subsubsection{Total benefit realised through self-consumption}
This indicator is obtained from realised benefits by each prosumer who is involved in a self-consumption loop relative to a zero loop configuration:
\begin{equation}\label{eq:benefit}
\sum_{h\in\Extendedloopset} \sum_{i \in [n]}  \sum_{s\in\Scenarioset} \sum_{t\in\Timeset} p_s v_h a_i^h \, (F_i^{st} (f_i^{st}-C_i^{st}) - R_i^{st} (r_i^{st}-P_i^{st})).
\end{equation}

\subsubsection{Root-node gap}
\rev{In order to analyse the tightness of each problem formulation, we used the relative root-gap defined as the objective function value difference between the best known integer solution before the branch-and-cut algorithm and the linear relaxation of the problem.}

\section{Results and discussion}
\label{sec:results}

\rev{This section discusses the obtained results in terms of computation times and business-related observations over the found solutions. All models were implemented in Python using the PuLP library, and the resulting instances were solved with the Gurobi optimisation solver. The LP files of all generated instances are available at} \citet{chasseray_2025_15074668}.

\subsection{Scalability on the temporal and geographical dimensions}

% \begin{figure}[tb]
%     \centerline{\includegraphics[trim={0cm 0cm 0cm 0cm},width=0.5\textwidth,clip=true]{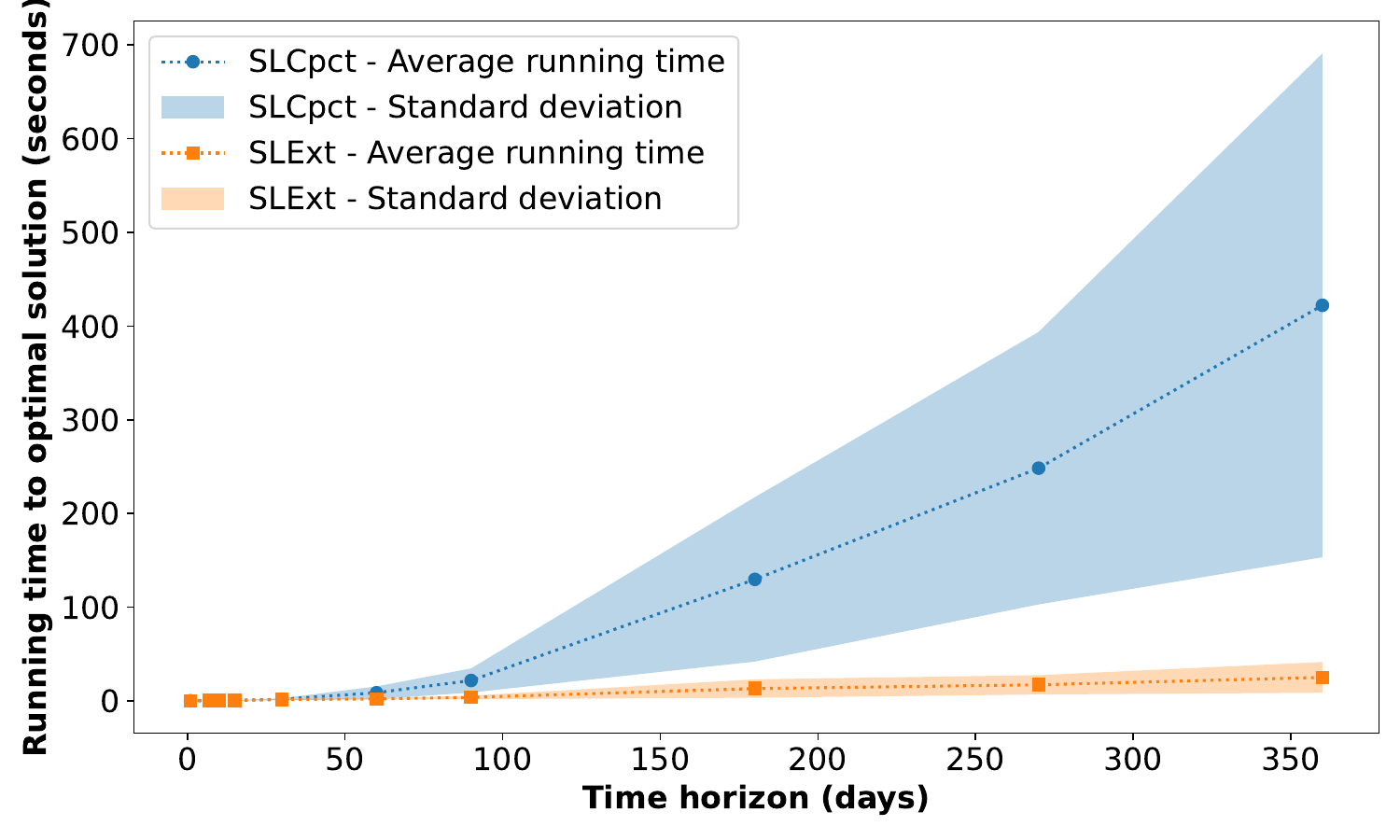}}
%     \caption{Evolution of resolution running time along different time horizon data}
%     \label{fig:horizon}
% \end{figure}

\begin{figure}[h]
    \centering
    \begin{subfigure}[b]{0.49\textwidth}
        \centering
        \includegraphics[width=\textwidth]{figures/fig_horizon.pdf}
        \caption{Across time horizon (10 actors)}
        \label{fig:horizon}
    \end{subfigure}
    \begin{subfigure}[b]{0.49\textwidth}
        \centering
        \includegraphics[width=\textwidth]{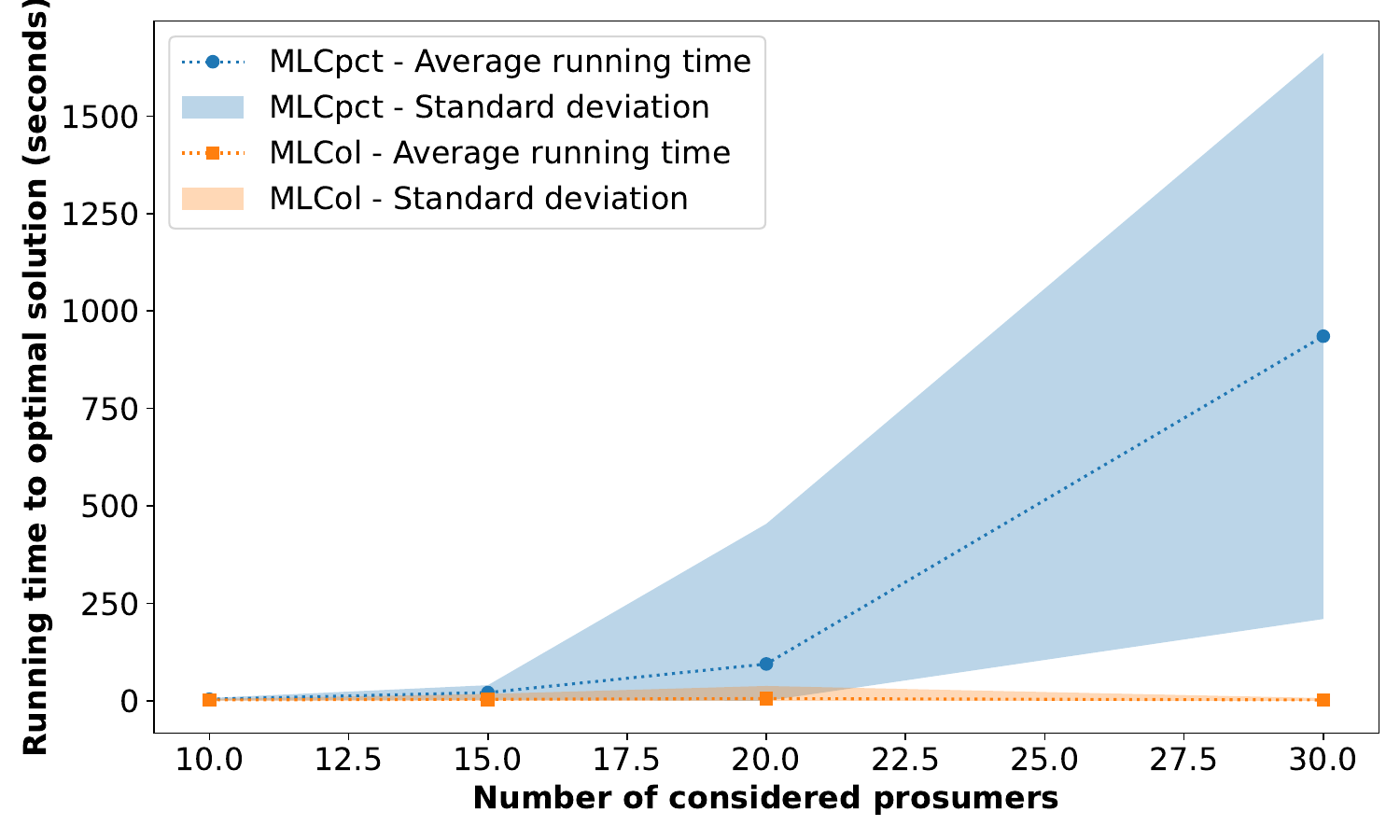}
        \caption{Across number of actors (7 days)}
        \label{fig:actors}
    \end{subfigure}
    
    \caption{Evolution of resolution running time of compact and decomposed models. \rev{SLCpct : Single loop compact model - SLExt : Benders extended single loop model - MLCpct : Multiloop compact model - MLCol : Generated cliques multiloop model}}
    \label{fig:runningtime-performances}
\end{figure}

As explained in the model presentation, extending the time horizon significantly increases the complexity of the problem, since each time step can influence whether or not an actor should be included in a collective self-consumption operation. 
Whether an actor belongs to or is excluded from a self-consumption operation is determined for the entire optimisation time horizon and cannot be changed from one time step to another. The existence of associated integer variables links each time step in the optimisation problem, and in the case of the compact formulation, requires all time steps to be solved within the same problem. The Benders extended reformulation allows for the extraction of integer variables and fixing them in the master problem, thereby enabling each time step of the sub-problem to be easily solvable independently. The increase in complexity over time horizon is the reason why a Benders decomposition of the model has been proposed. \cref{fig:horizon} illustrates the evolution of the resolution time of the single loop models for different time horizons, comparing the compact formulation with the extended formulation.

To assess variability, each scenario is represented with a set of 100 generated instances. This sampling covers variability in geographic distribution, prosumers' installed power, and consumption profiles. The compared computing time only considers the solving computing time and does not account for the time allocated to defining the problems (compact model, master problem, and sub-problems) or the time allocated to adding Benders cuts between the resolution of the master problem and sub-problems. When focusing on the standard deviation of each scenario, we observe significant variations in resolution time depending on consumption/production data and geographical distribution. However, for equivalent resolution time, the variability is fairly similar for extended and compact formulations.

It can be observed that a short time horizon is effectively handled by both compact and extended formulations. In many cases, the compact formulation performs even better than the extended formulation, as the single problem remains simple enough to be solved in less time than two separate problems. However, for larger time scales, the decomposed versions demonstrate better resolution times. This difference becomes more pronounced with larger time horizons, highlighting the benefits of using decomposed models. \cref{fig:horizon} illustrates resolution times on relatively small instances (10 actors). Tests have also been conducted on larger instances, showing a similar increase in resolution time for the compact model, while the extended model maintains reasonable resolution times. These observations highlight the advantages of using a decomposed formulation for larger time horizons.

The number of actors is a parameter that should influence the design aspect of the problem. The proposed clique generation algorithm has been applied to several instances with different numbers of prosumers. To keep the number of generated cliques reasonable, the selected instances feature only clustered spatial distributions of actors. This clustered distribution is generated by randomly sampling $p$ clusters across a square area, ensuring that the overall actor density is maintained. Clusters are generated to contain between 4 to 6 actors each, and the number of clusters is determined accordingly.

\cref{fig:actors} compares the computing time of the compact multiloop model (MLCpct) with the reformulation that integrates clique generation (MLCol). Similar to the evaluation of scalability in the temporal dimension, the computing time associated with the problem creation and addition of cuts are not considered in the comparison. The observations made previously regarding the time horizon dimension can equivalently be applied here for the number of actors. For a low number of actors, compact and reformulated models show similar performances. However, as the number of actors increases, the model involving prior clique generation outperforms the compact model. Tests conducted on larger instances (more than 30 actors) demonstrated that the compact formulation is unable to find the optimal solution in less than an hour, whereas clique generation drastically reduces the complexity of the problem and solving time.

Note that the method we propose through clique generation works well on realistic instances, specifically because their corresponding neighbourhood graph follows the sparsity assumption. We stress-tested the proposed formulation on complete graphs, observing a steep increase in memory requirements making the models intractable.

\rev{We emphasize the value of the proposed models, which provide exact and scalable solutions in both temporal and spatial dimensions. This is one of the main strengths of the proposed models compared with previous work, where the design problem was limited in terms of actors and planning horizon, or where selected actors were assumed to be fixed inputs.}

\subsection{Root-node relaxation tightness}

\begin{figure}[htbp]
  \centering

  \begin{subfigure}[b]{0.49\textwidth}
    \includegraphics[width=\textwidth]{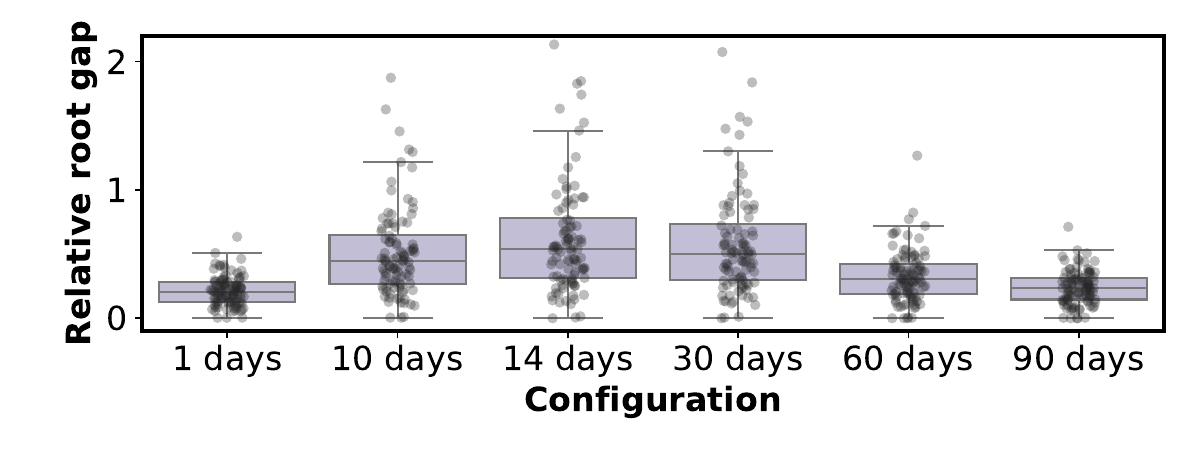}
    \caption{Across time horizon (10 actors) - SLCpct model. Three outliers removed for 10 days (value: 7.46), 14 days (value: 21.53) and 30 days (value: 21.50) configurations}
    \label{fig:root_gap_sl_days}
  \end{subfigure}
  \begin{subfigure}[b]{0.49\textwidth}
    \includegraphics[width=\textwidth]{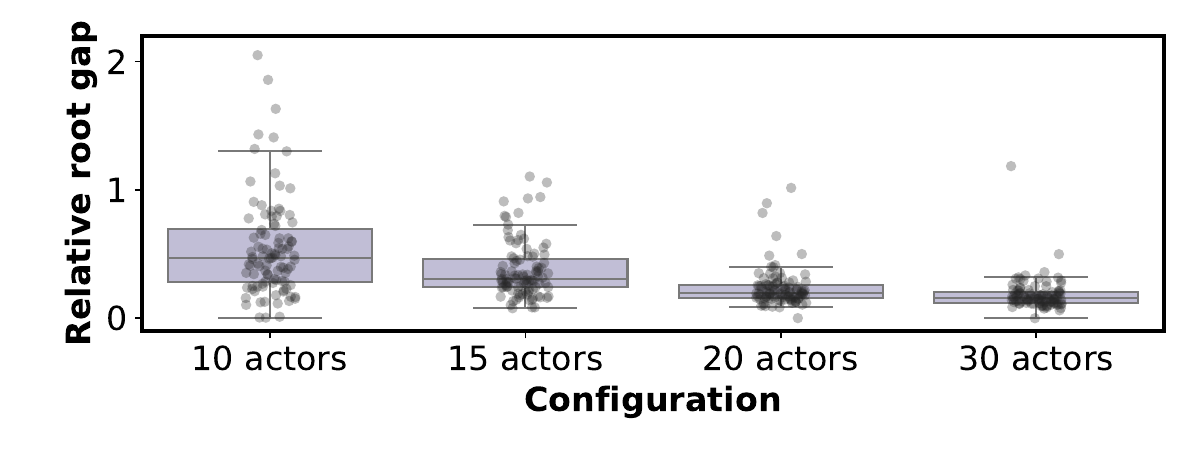}
    \caption{Across number of actors (7 days) - SLCpct model. One outlier removed for 10 actors configuration (value: 11.20)\\~}
    \label{fig:root_gap_sl_actors}
  \end{subfigure}

  \begin{subfigure}[b]{0.49\textwidth}
    \includegraphics[width=\textwidth]{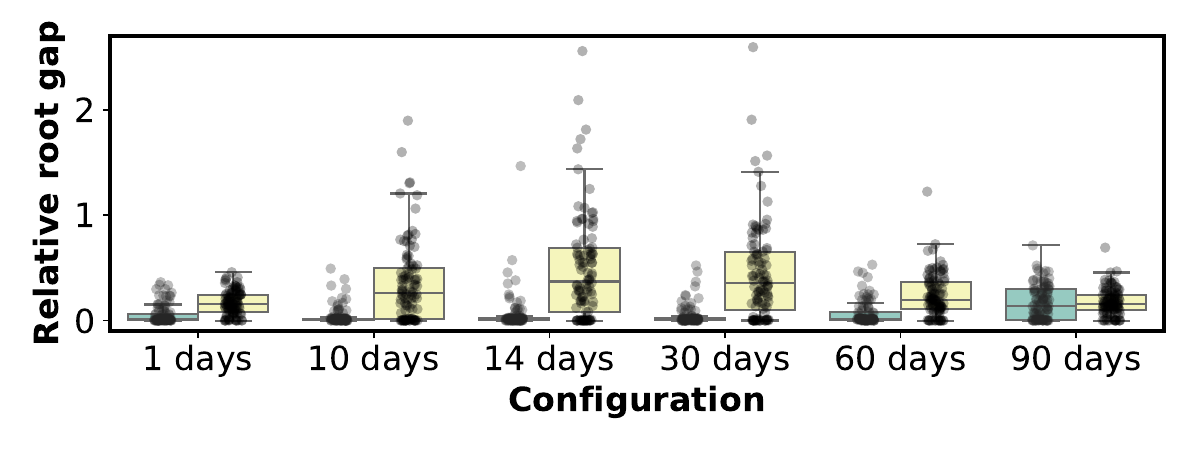}
    \caption{Across time horizon (10 actors) - MLCpct (left) and MLCol (right) models. One outlier removed for 30 days configuration (MLCol - value: 17.10)\\~}
    \label{fig:root_gap_ml_days}
  \end{subfigure}
  \begin{subfigure}[b]{0.49\textwidth}
    \includegraphics[width=\textwidth]{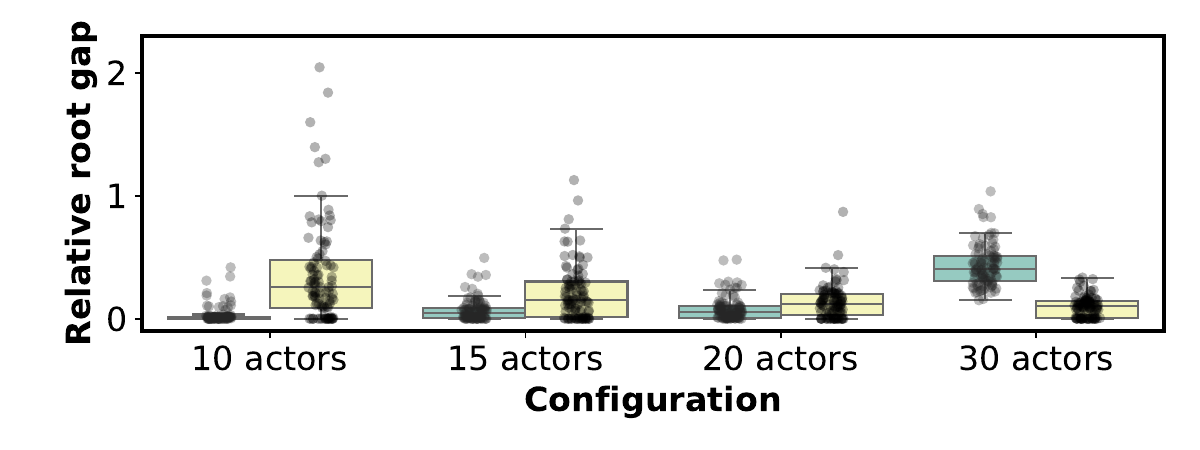}
    \caption{Across number of actors (7 days) - MLCpct (left) and MLCol (right) models. Two outliers removed at 10 actors (MLCol - value: 10.27) and 30 actors configuration (MLCpct - value: 3.64)}
    \label{fig:root_gap_ml_actors}
  \end{subfigure}

  \caption{Root gap distribution by solving method across configurations (some outliers have been removed from the plots for clarity purpose)}
  \label{fig:root_gap}
\end{figure}

We analyse in this section the quality of the formulation through the root-node gap which provides a measure of how tight the different relaxations are. \rev{The root-node gap is defined as the relative difference between the optimal value of the linear programming relaxation and the value of the integer formulation before branching.}
\cref{fig:root_gap} illustrates how the root-node gap evolves with respect to the time horizon and the number of considered prosumers for the different problem formulations.
Focusing first on single loop models, \cref{fig:root_gap_sl_actors} shows that increasing the number of prosumers results in smaller relative gaps, thus with linear relaxations that are tighter around integer solutions. Interestingly, as illustrated in \cref{fig:root_gap_sl_days}, the root-node gap is maximal for ten prosumers between 14 and 30 days of horizon, with shorter and longer horizons both resulting in smaller gaps and tight formulations, which leads us to conjecture that longer planning horizons increases the contrast in the economic interest to select actors, reducing the root-node fractionality.

Having a lower relative root gap however, does not imply that the problem can be solved in a shorter time, as shows \cref{fig:horizon} for the single loop formulation. Nevertheless, it indicates that the resolution should be easier relatively to the size of the mixed-integer formulation (number of variables and constraints).

This tendency does not hold for multiple-loop compact formulations. In these cases, the root gap tends to increase with longer time horizons (greater than 30 days). The same effect is particularly true when increasing the number of prosumers, especially from 20 to 30 prosumers, where a significant root gap difference is observed.

What remains noteworthy, though, is that the reformulated problem using clique enumeration exhibits the opposite behaviour. Similar to single loop models, it exhibits a peak root gap around 14 days of time horizon and decreases for larger time horizons.

\cref{fig:root_gap_ml_days} and \cref{fig:root_gap_ml_actors} highlight the benefits of the clique generation formulation. This approach results in lower root gaps than the compact model once the number of prosumers becomes large (i.e., greater than 20). Furthermore, unlike the compact formulation, clique generation not only produces smaller root-node gaps but also tends to further reduce them as either the time horizon or the number of prosumers increases, hinting that our approaches will produce high-quality solutions and guarantees even without running a full branch-and-cut algorithm.

\subsection{Solution sensitivity analysis}
In addition to runtime performance, another important focus is on the solutions derived from the multiloop model from the application perspective. This entails evaluating the solutions in terms of their cost effectiveness and assessing the influence of operational parameters on proposed solutions.

\subsubsection{Influence of the legal maximum installed power}

One criterion that constrains the formation of collective self-consumption loops is the legal maximum authorised installed power per loop. Sensitivity to this parameter can be important to the operator defining the loops, since this legal requirement may change with legislation.

\begin{figure}[h]
    \centering
    \begin{subfigure}[b]{0.33\textwidth}
        \centering
        \includegraphics[width=\textwidth]{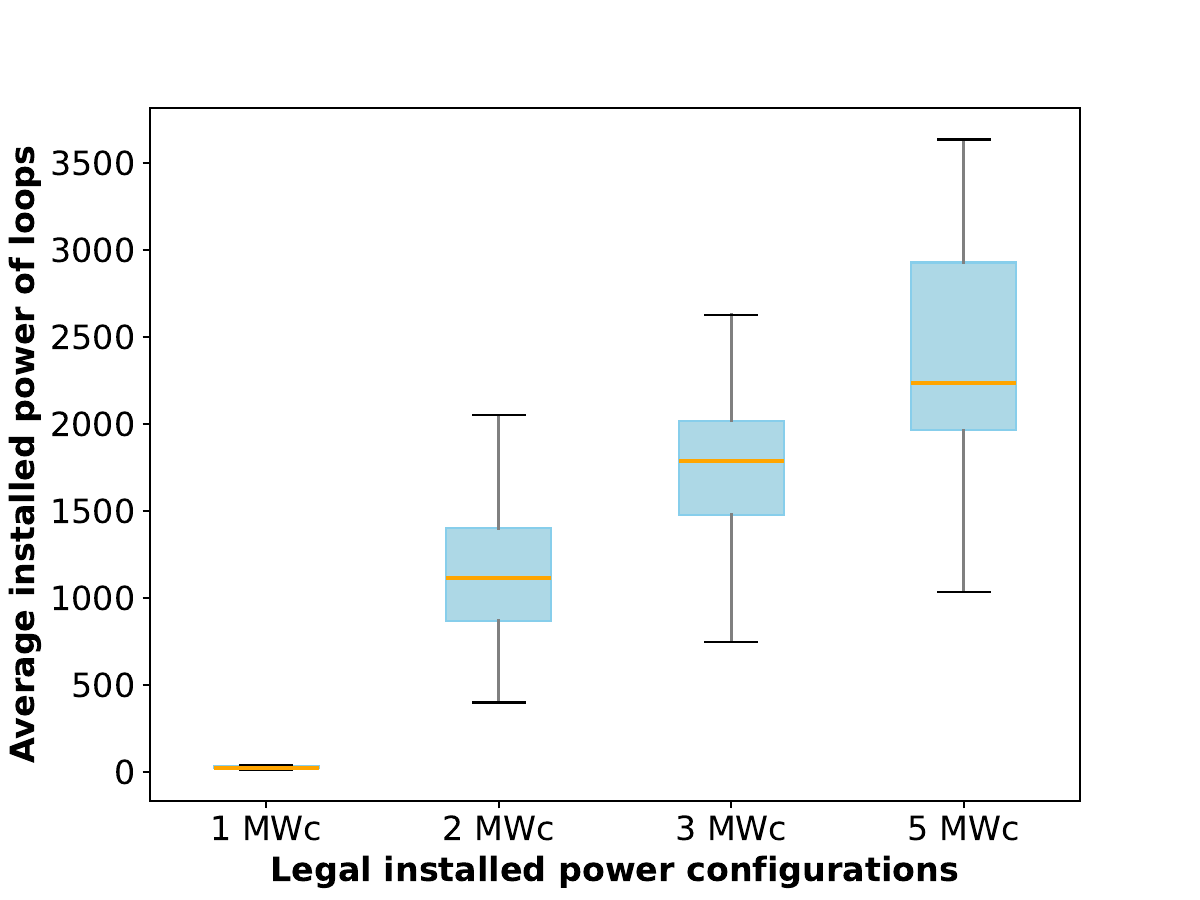}
        \caption{Installed power of the loops}
        \label{fig:power_looppower}
    \end{subfigure}
    \begin{subfigure}[b]{0.33\textwidth}
        \centering
        \includegraphics[width=\textwidth]{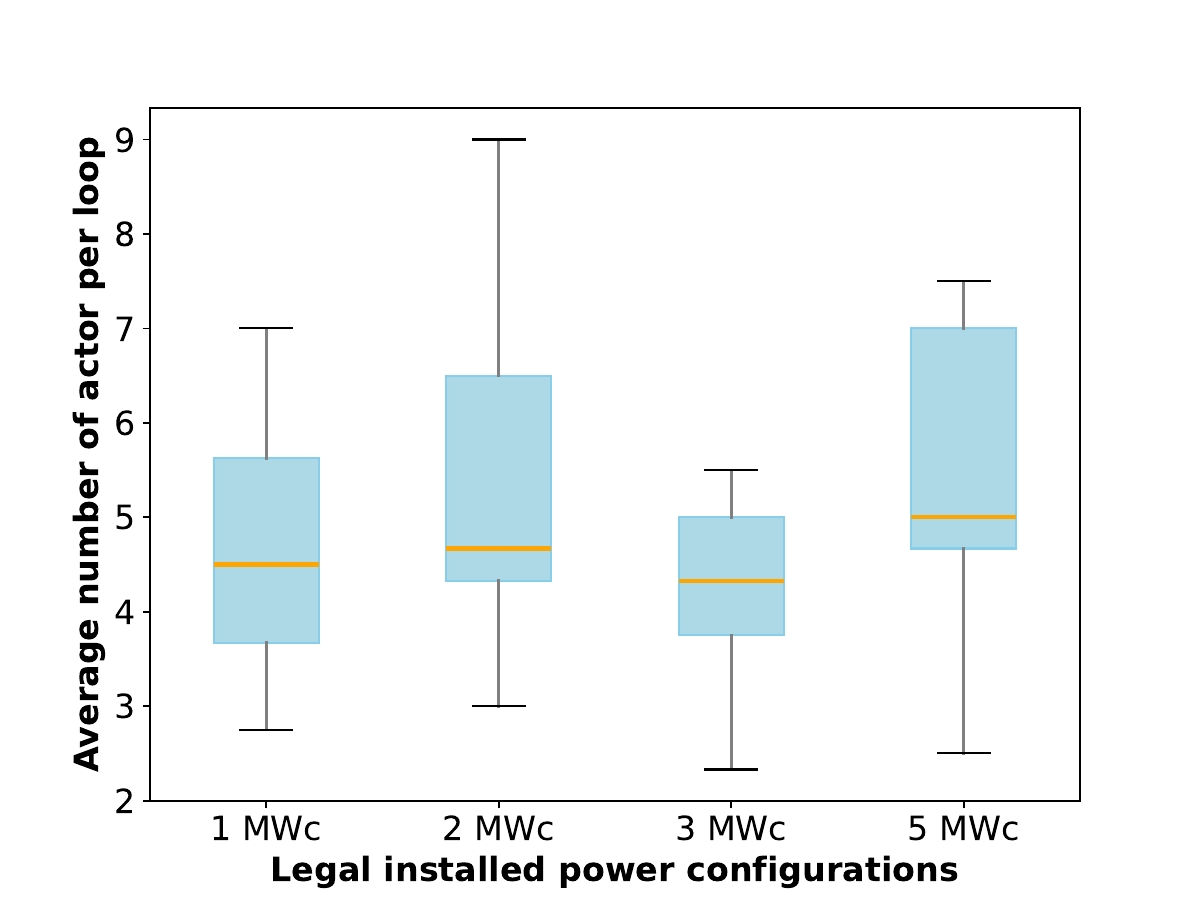}
        \caption{Size of the loops}
        \label{fig:power_nact}
    \end{subfigure}
    \begin{subfigure}[b]{0.33\textwidth}
        \centering
        \includegraphics[width=\textwidth]{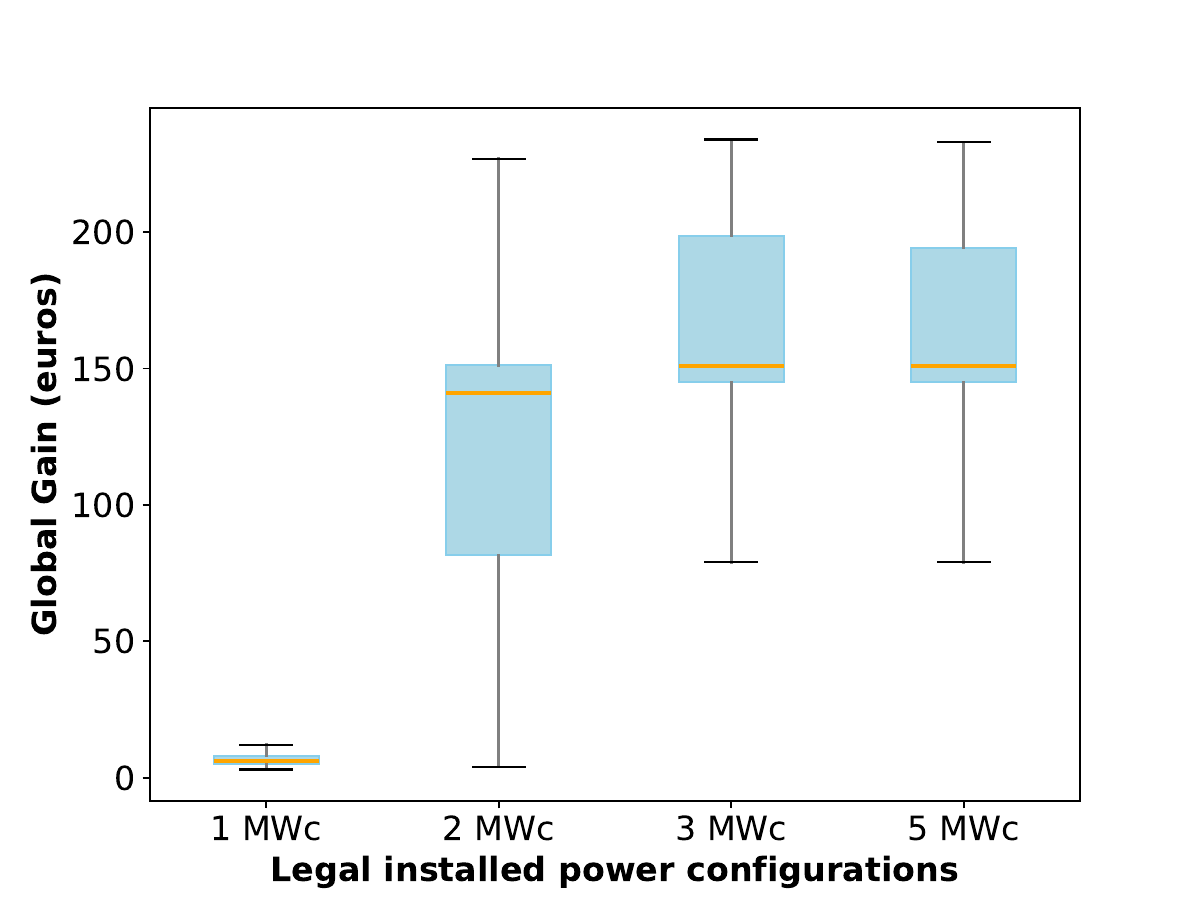}
        \caption{Global gain of the loops}
        \label{fig:power_gain}
    \end{subfigure}
    \begin{subfigure}[b]{0.33\textwidth}
        \centering
        \includegraphics[width=\textwidth]{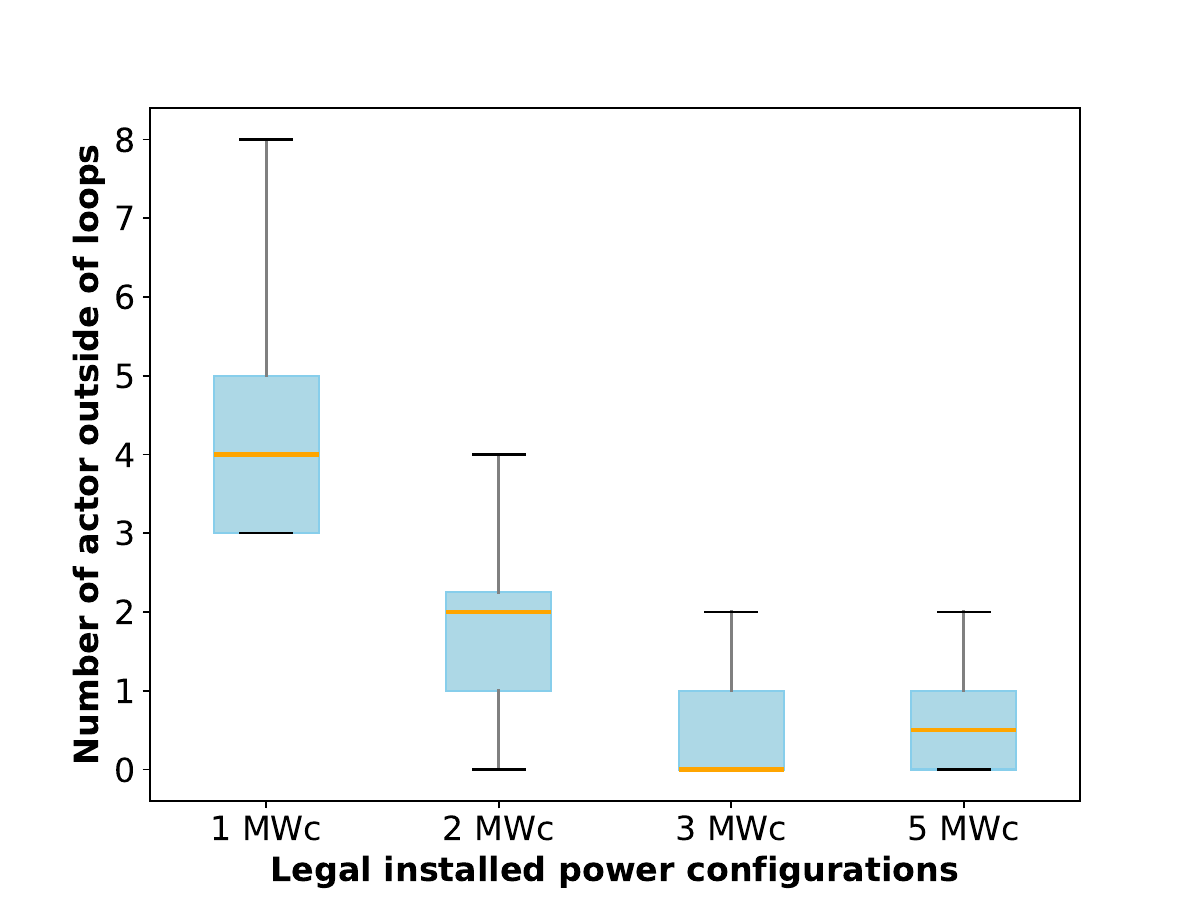}
        \caption{Number of actors not in a loop}
        \label{fig:power_actwo}
    \end{subfigure}
    \caption{Influence of the legal power on operational indicators}
    \label{fig:power-all}
\end{figure}

\cref{fig:power_looppower} and \ref{fig:power_gain} respectively illustrate the impact of this parameter on the average size of the corresponding loops (in terms of installed power) and the benefit of the formed collective self-consumption loops.

Unsurprisingly, the average installed power of created loops grows with the value of $P_{\mathrm{leg}}$, allowing the loops to reach larger installed power, which results in the model attempting to maximise the size of the loops. However, \cref{fig:power_nact} shows that the number of actors per loop is not significantly affected by the different $P_{\mathrm{leg}}$ configurations. This can be explained by the fact that accepting a new large producer in the loop may result in the removal of another actor, based on distance criteria for example.

Instances having an average installed power per loop that remains low even though not restricted by $P_{\mathrm{leg}}$ can be explained by cases where all the large producer groups are concentrated in a single cluster. This results in a loop with one or two big producers, while other loops only contain actors with low installed power. This clustering effect can lead to instances where the distribution of actors across loops is uneven, causing some loops to have significantly higher installed power than others.
One also observes the creation of loops with very small installed power when $P_{\mathrm{leg}}$ is set to 1 MWc. This is primarily caused by the gap in installed power between 9 kWc and 1 MWc in the instances used. In this configuration, no created loop can contain any of the professional producers, resulting in loops only containing small producers and consequently having low installed power.

These observations, made on relatively small instances (15 actors), demonstrate how an optimisation tool based on the proposed model can be used to evaluate the viability of different types of production plants. By examining the impact of parameters such as the legal maximum authorised installed power ($P_{\mathrm{leg}}$), decision-makers can gain insights into the optimal configuration.

\begin{figure}[h]
    \centering
    \begin{subfigure}[b]{0.33\textwidth}
        \centering
        \includegraphics[width=\textwidth]{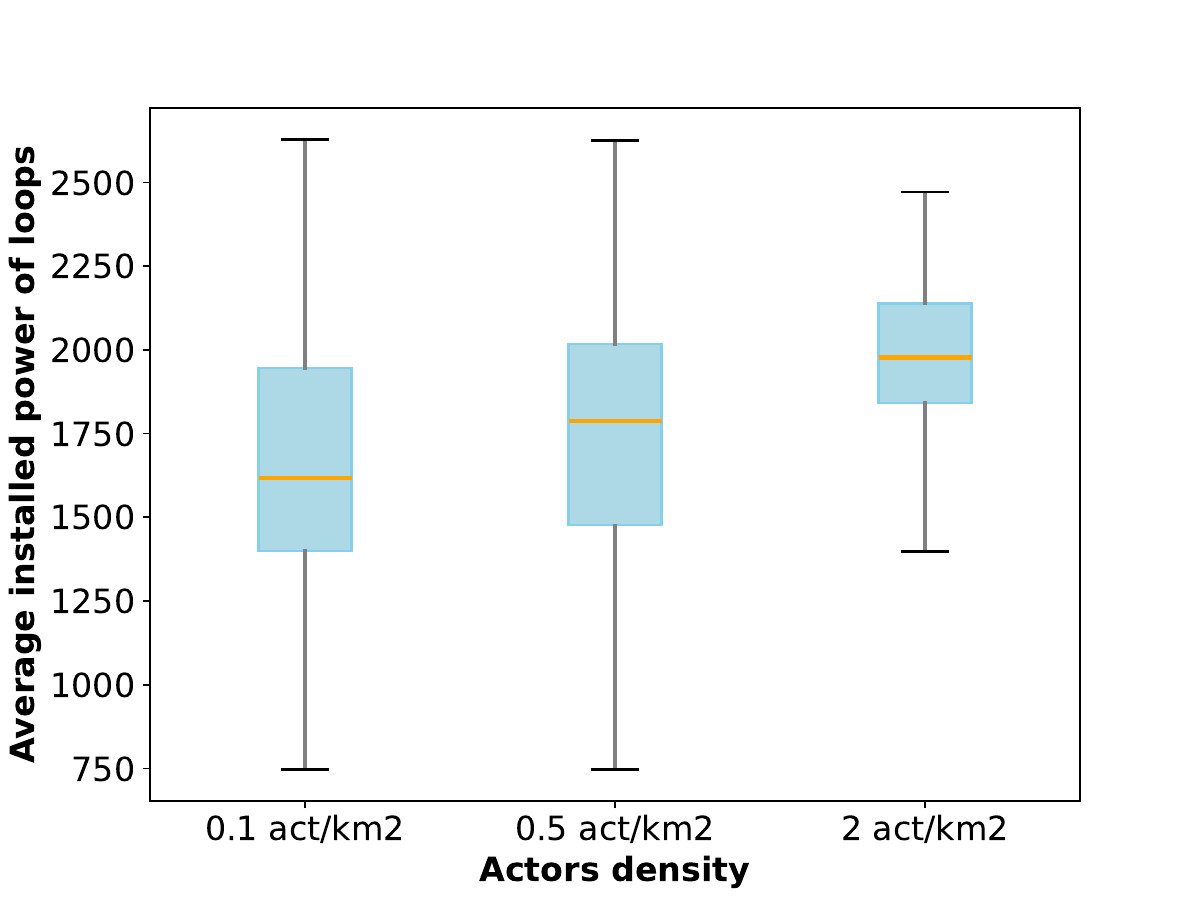}
        \caption{Installed power of the loops}
        \label{fig:density_looppower}
    \end{subfigure}
    \begin{subfigure}[b]{0.33\textwidth}
        \centering
        \includegraphics[width=\textwidth]{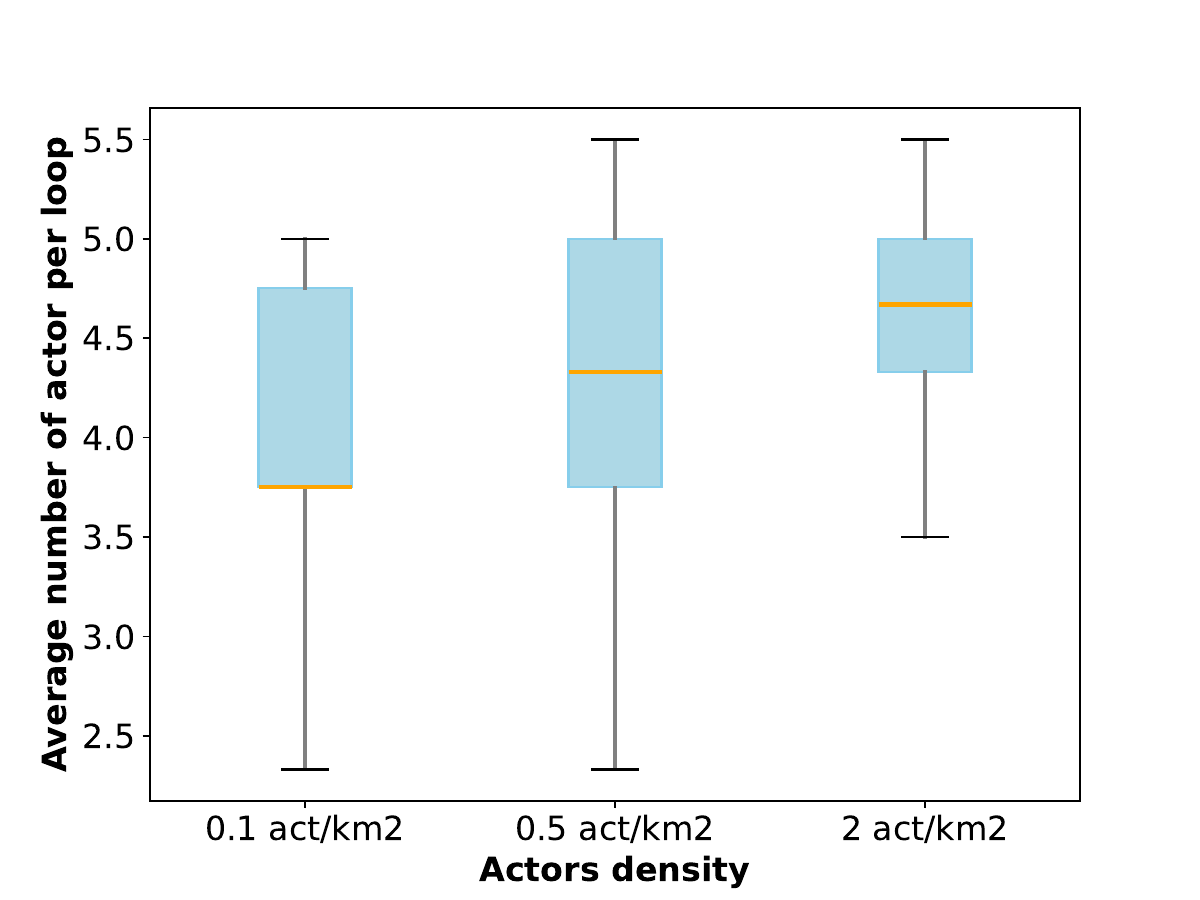}
        \caption{Size of the loops}
        \label{fig:density_nact}
    \end{subfigure}
    \begin{subfigure}[b]{0.33\textwidth}
        \centering
        \includegraphics[width=\textwidth]{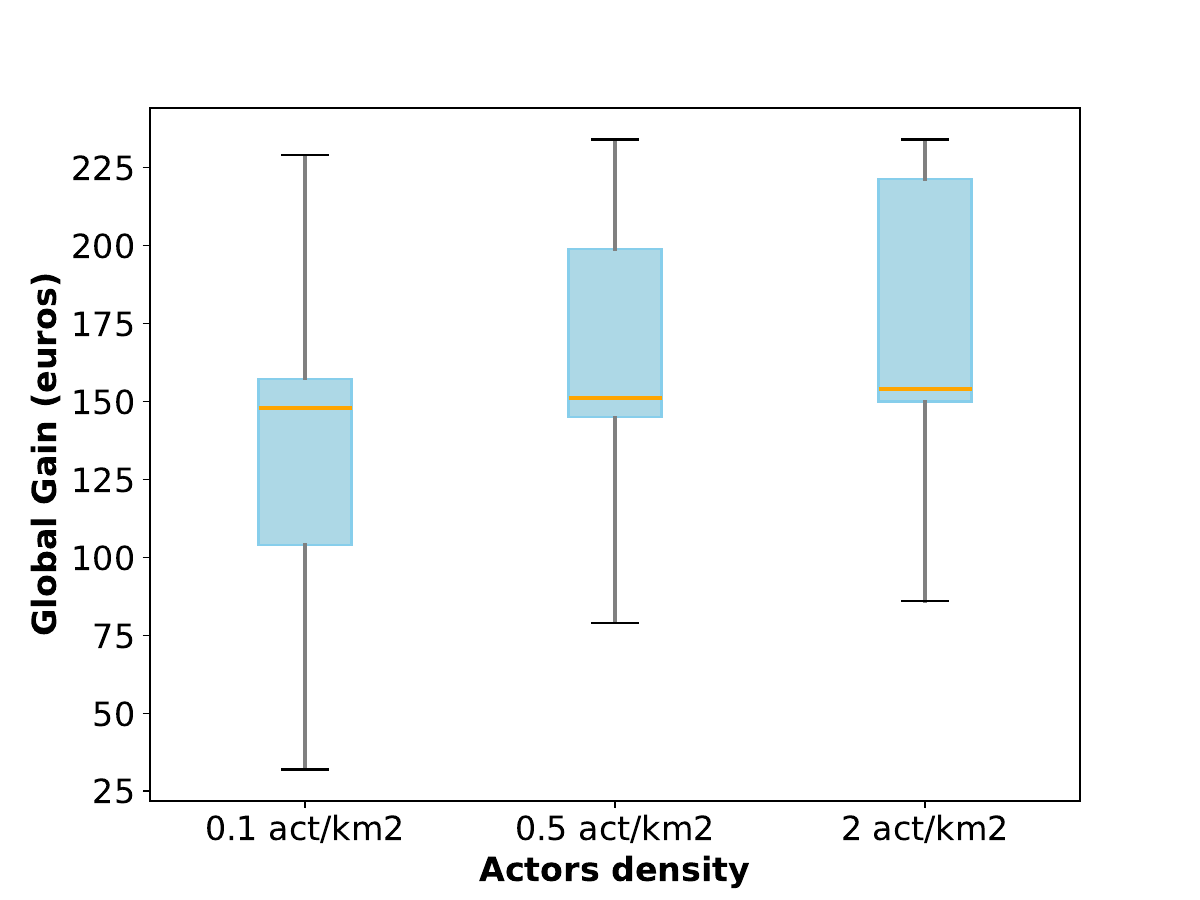}
        \caption{Global gain of the loops}
        \label{fig:density_gain}
    \end{subfigure}
    \begin{subfigure}[b]{0.33\textwidth}
        \centering
        \includegraphics[width=\textwidth]{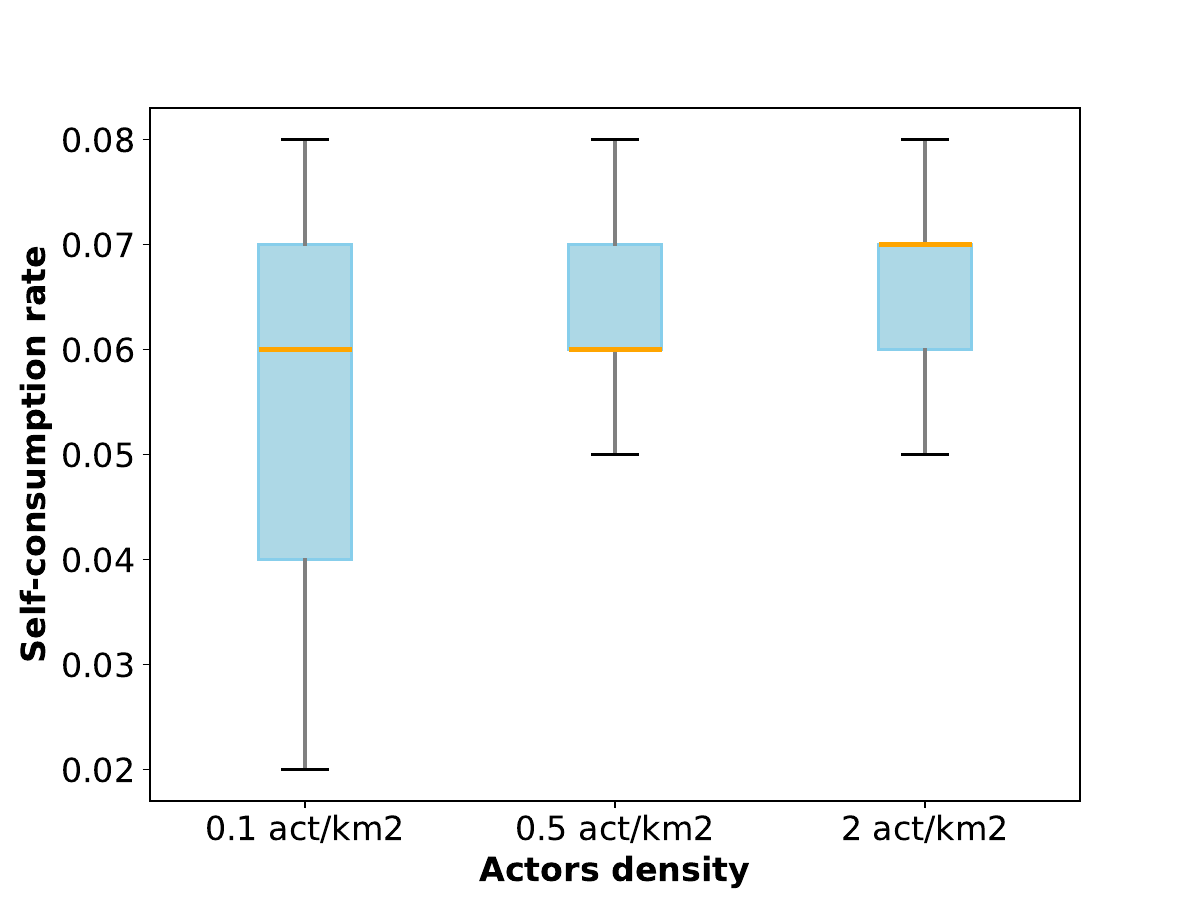}
        \caption{Self-consumption rate}
        \label{fig:density_scr}
    \end{subfigure}
    \caption{Influence of distribution density on operational indicators}
    \label{fig:density-all}
\end{figure}

\subsubsection{Influence of actor density}

Increasing the actor density favours the formation of larger collective self-consumption communities. \cref{fig:density_looppower} illustrates a slight increase in the installed power of loops as clusters are brought closer together. This reflects the fact that separated clusters have a greater chance of being at a reasonable distance in denser configurations. This observation is confirmed by analyzing the average number of actors per loop in \cref{fig:density_nact}. Since larger loops ensure a greater amount of locally produced electricity, the overall gain realised is also directly affected by an increase in density, as shown in \cref{fig:density_gain}.

However, this increase in gain can be nuanced when considering the self-consumption rate. The increase in self-consumption rate is not as pronounced as for other indicators. This is because the self-consumption rate is influenced not only by the amount of locally produced electricity but also by the ability to consume this electricity. From \cref{fig:density_scr}, we can assume that most configurations, even though they allow for larger loops as density increases from $\text{0.5 act/km}^2$ to $\text{2 act/km}^2$, do not necessarily provide the corresponding consumption profiles to increase the self-consumption rate. However, this indicator could potentially be improved through the inclusion of more net consumers, which would raise the consumption potential.

In real-world applications, the density of actor distribution is not an actionable parameter but rather a result of the choice of considered candidates for self-consumption communities. Increasing the number of candidates should therefore be advantageous for profitable self-consumption communities, without necessarily guaranteeing an increase in collective self-consumption rates. 

\begin{figure}
    
\end{figure}

\begin{figure}[h]
    \centering
    \begin{subfigure}[b]{0.49\textwidth}
        \centering
        \includegraphics[width=1\linewidth]{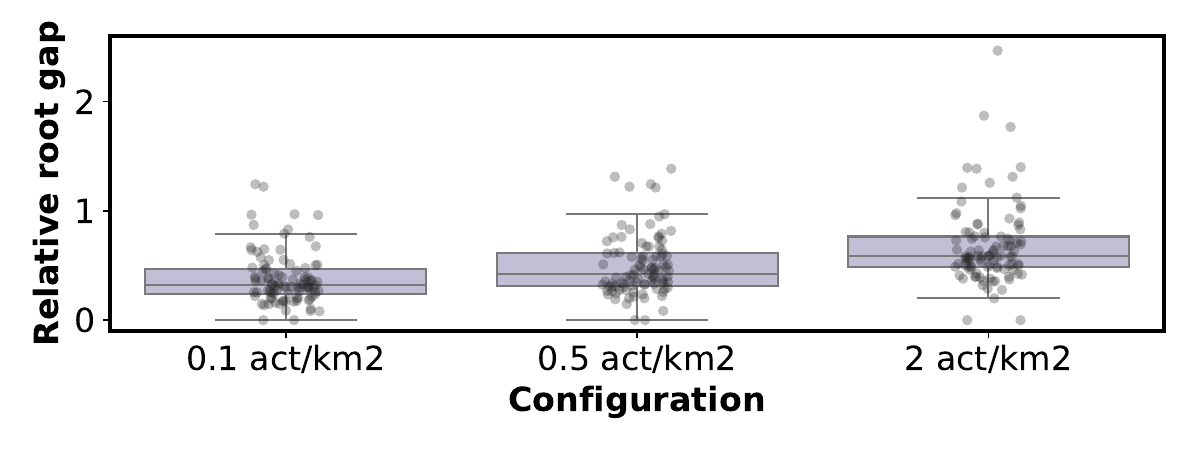}
        \caption{Root gap evolution along actors density for SLCpct model}
        \label{fig:root_gap_density_sl}
    \end{subfigure}
    \begin{subfigure}[b]{0.49\textwidth}
        \centering
        \includegraphics[width=1\linewidth]{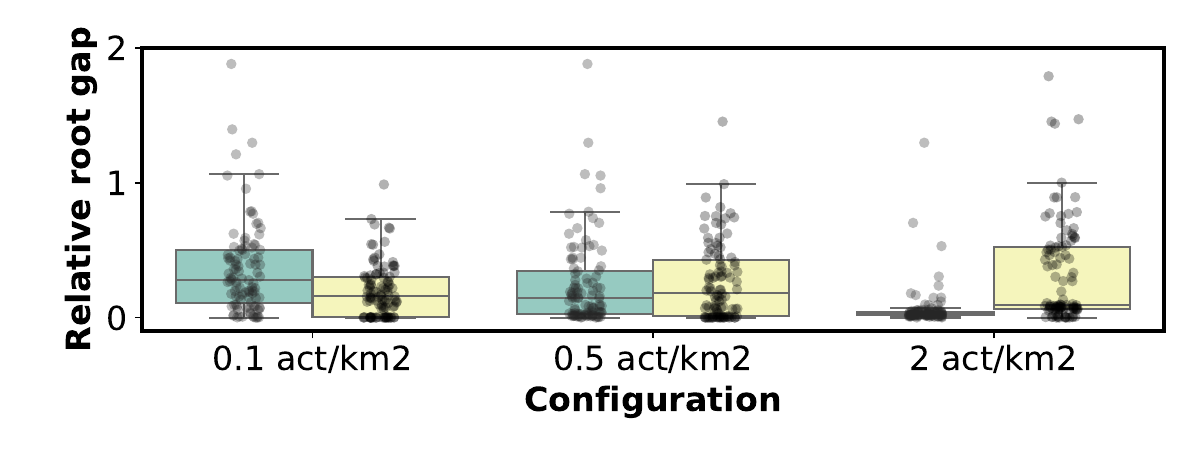}
        \caption{Root gap evolution along actors density for MLCpct (left) and MLCol (right) models}
        \label{fig:root_gap_density_ml}
    \end{subfigure}
    
    \caption{Root gap distribution by solving method along actors density}
    \label{fig:root_gap_density}
\end{figure}

\cref{fig:root_gap_density} shows the relative root gap for group of instances having different prosumers density. We observe that the compact formulation performs better for denser configurations, as having close actors is similar to relaxing the distance constraint, leading to an actors selection problem driven only by the benefit of loops and constrained by power constraint. This figure also shows that the formulation involving clique generation (MLCol) is not suited for denser problems, as having closer actors conducts to a high number of possible cliques, and a bigger number of variables. For sparse configurations however the clique generation performs better than the compact formulation, as many invalid actors combinations are not evaluated.

\subsection{Influence of exposition and period of the year}

\begin{figure}[h]
    \centering
    \begin{subfigure}[b]{0.328\textwidth}
        \centering
        \includegraphics[width=\textwidth]{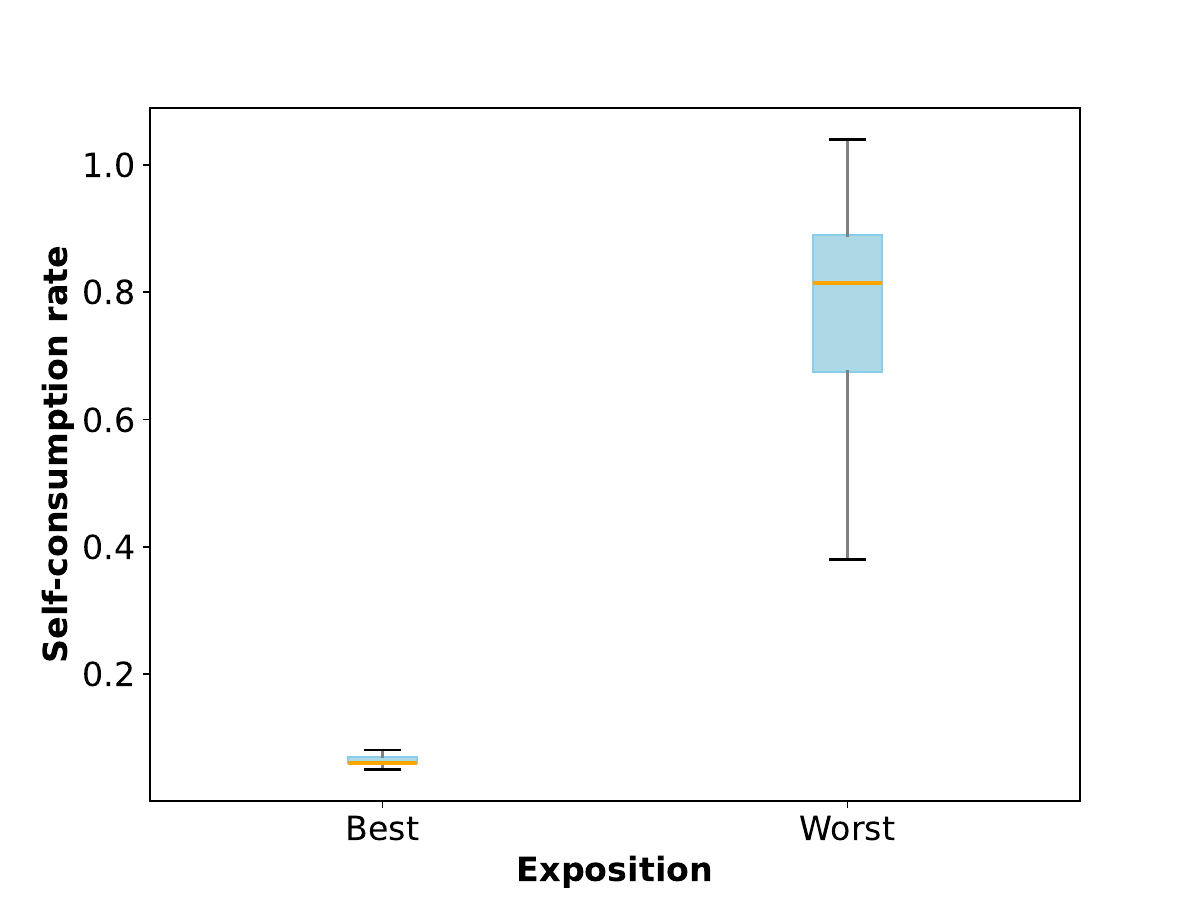}
        \caption{Self-consumption rate}
        \label{fig:exposition_scr}
    \end{subfigure}
    \begin{subfigure}[b]{0.328\textwidth}
        \centering
        \includegraphics[width=\textwidth]{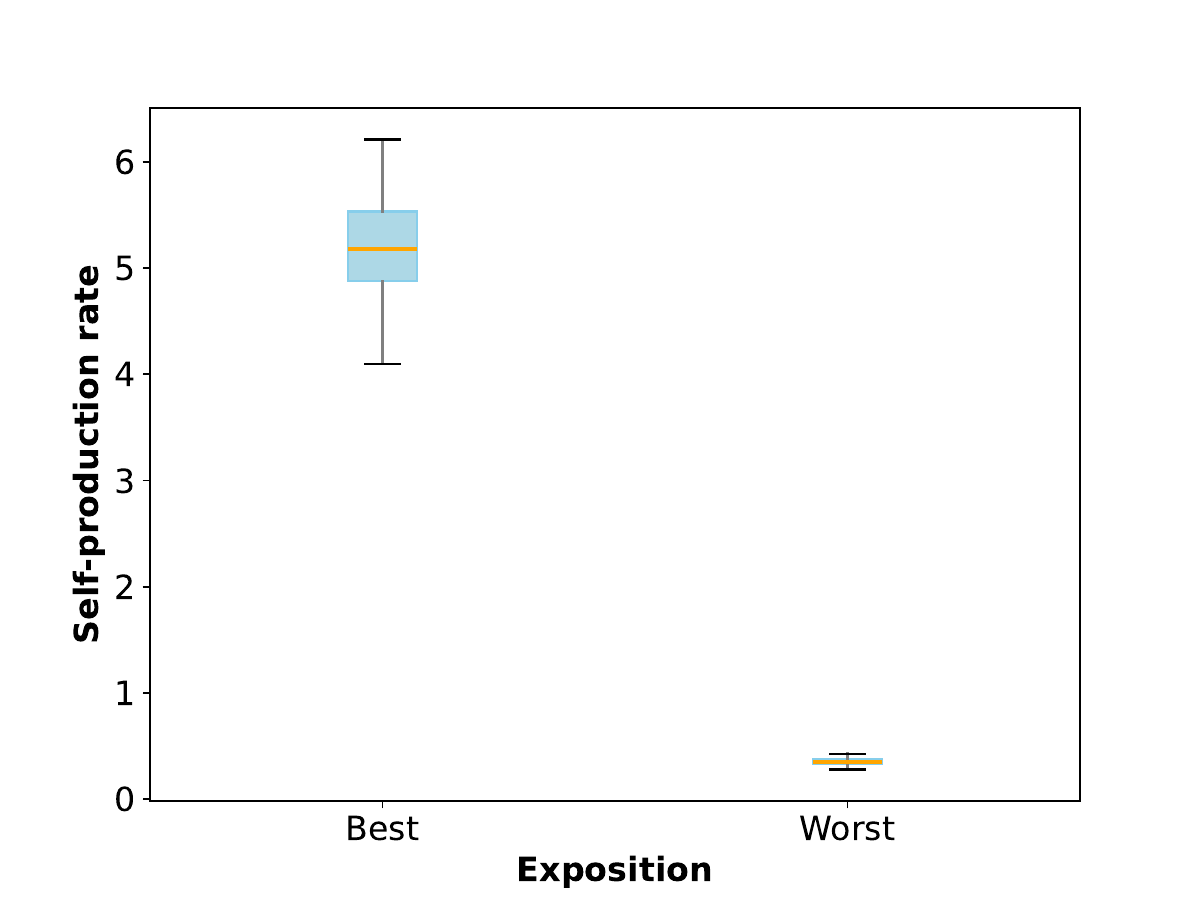}
        \caption{Covering rate}
        \label{fig:exposition_spr}
    \end{subfigure}
    \begin{subfigure}[b]{0.328\textwidth}
        \centering
        \includegraphics[width=\textwidth]{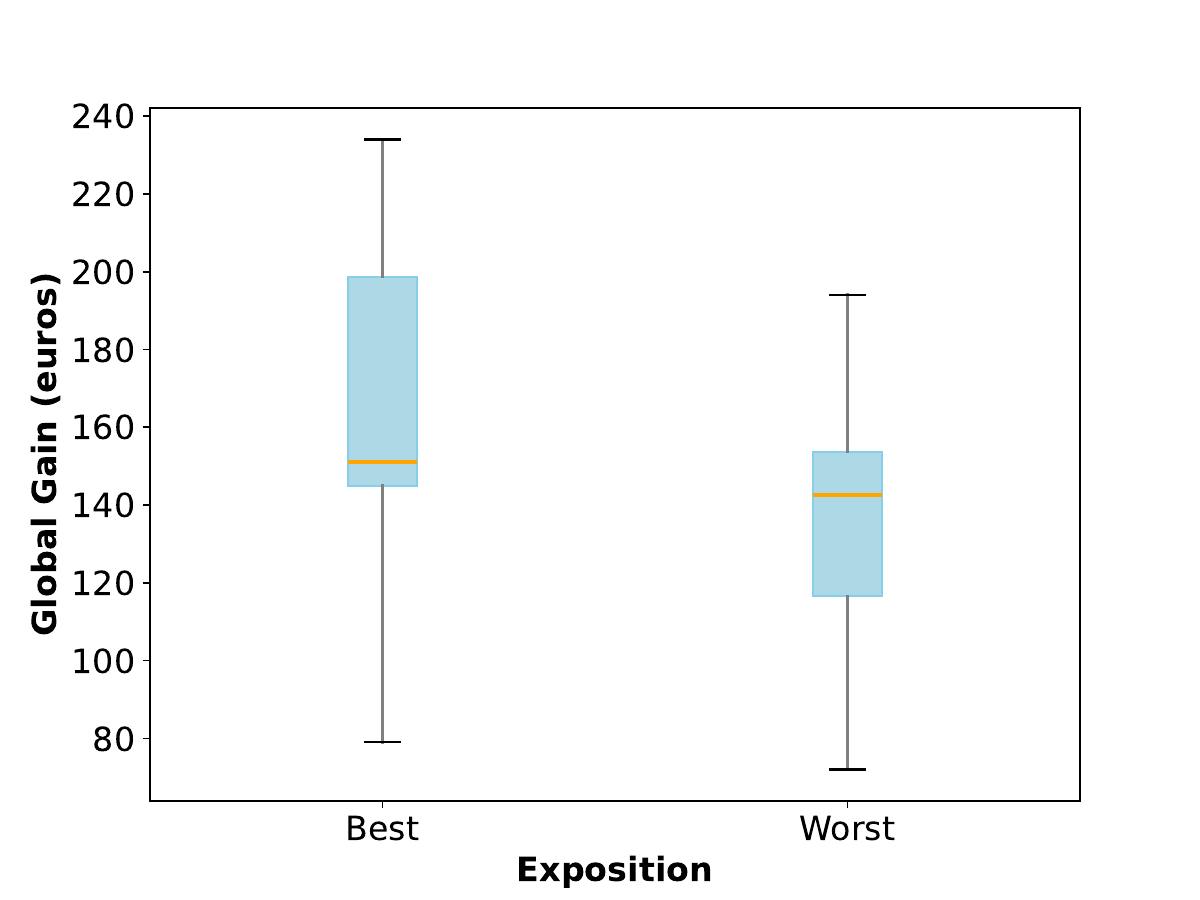}
        \caption{Global gain of the loops}
        \label{fig:exposition_gain}
    \end{subfigure}
    \caption{Influence of exposition on operational indicators}
    \label{fig:exp-all}
\end{figure}

Solar panel exposure directly impacts production volumes as it influences the production yield of plants. Even though the best configurations logically favour greater economic benefits (see \cref{fig:exposition_gain}), dealing with different expositions may result in changes in operational indicators. In the worst exposition case, the total production amount is lower than consumption needs, whereas it is higher with the best exposition configuration. This results in an inversion in terms of self-production rate and self-consumption rate, as \cref{fig:exposition_scr} and \ref{fig:exposition_spr} demonstrate.

This highlights the fact that the self-consumption indicator is not always in line with the economic objective, which is computed relative to the production volumes. Guiding the optimisation through the economic objective is therefore a way to balance this effect, taking into account both self-consumption and self-production indicators. By optimising based on economic objectives and under the assumption that surplus selling prices are lower than electricity costs, decision-makers can ensure that decisions align with both financial goals and sustainability targets.

\begin{figure}[h]
    \centering
    \begin{subfigure}[b]{0.328\textwidth}
        \centering
        \includegraphics[width=\textwidth]{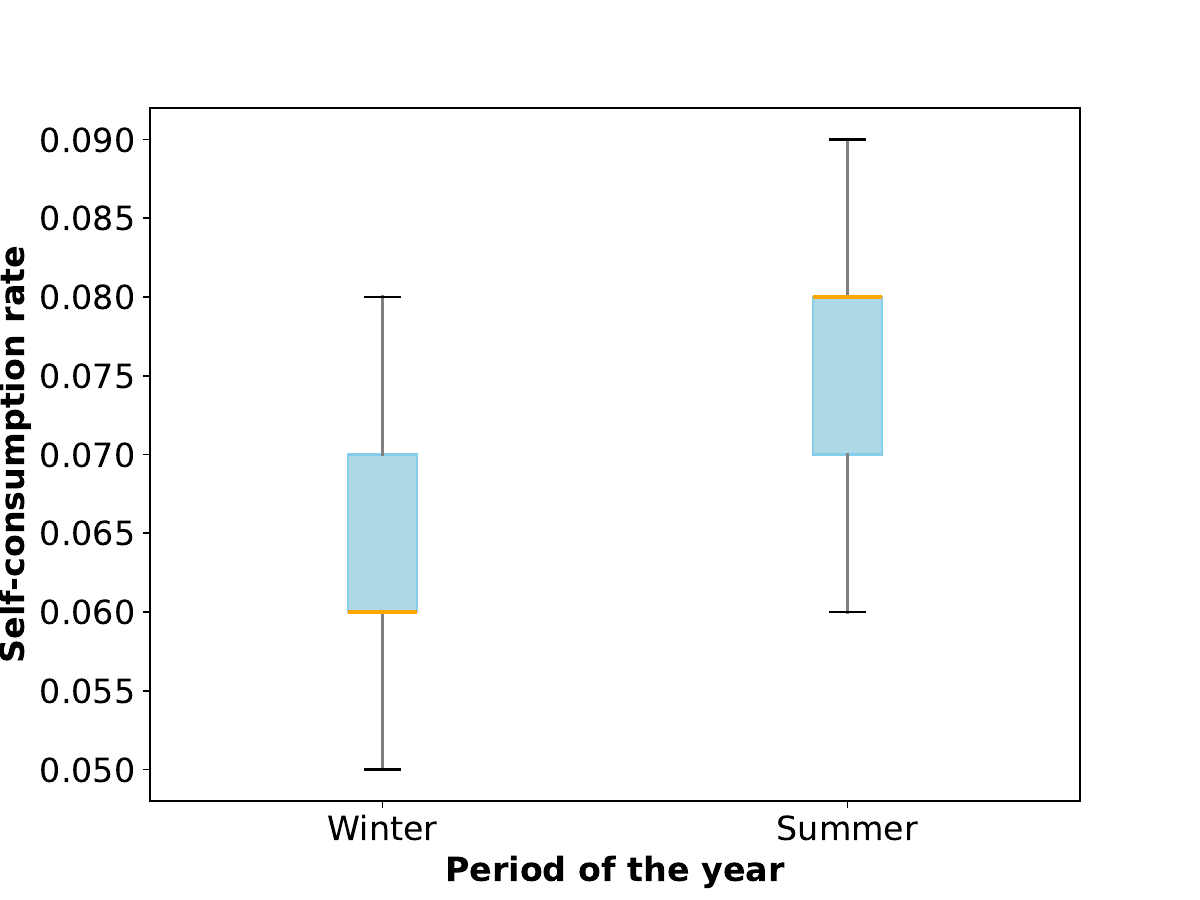}
        \caption{Self-consumption rate}
        \label{fig:year_scr}
    \end{subfigure}
    \begin{subfigure}[b]{0.328\textwidth}
        \centering
        \includegraphics[width=\textwidth]{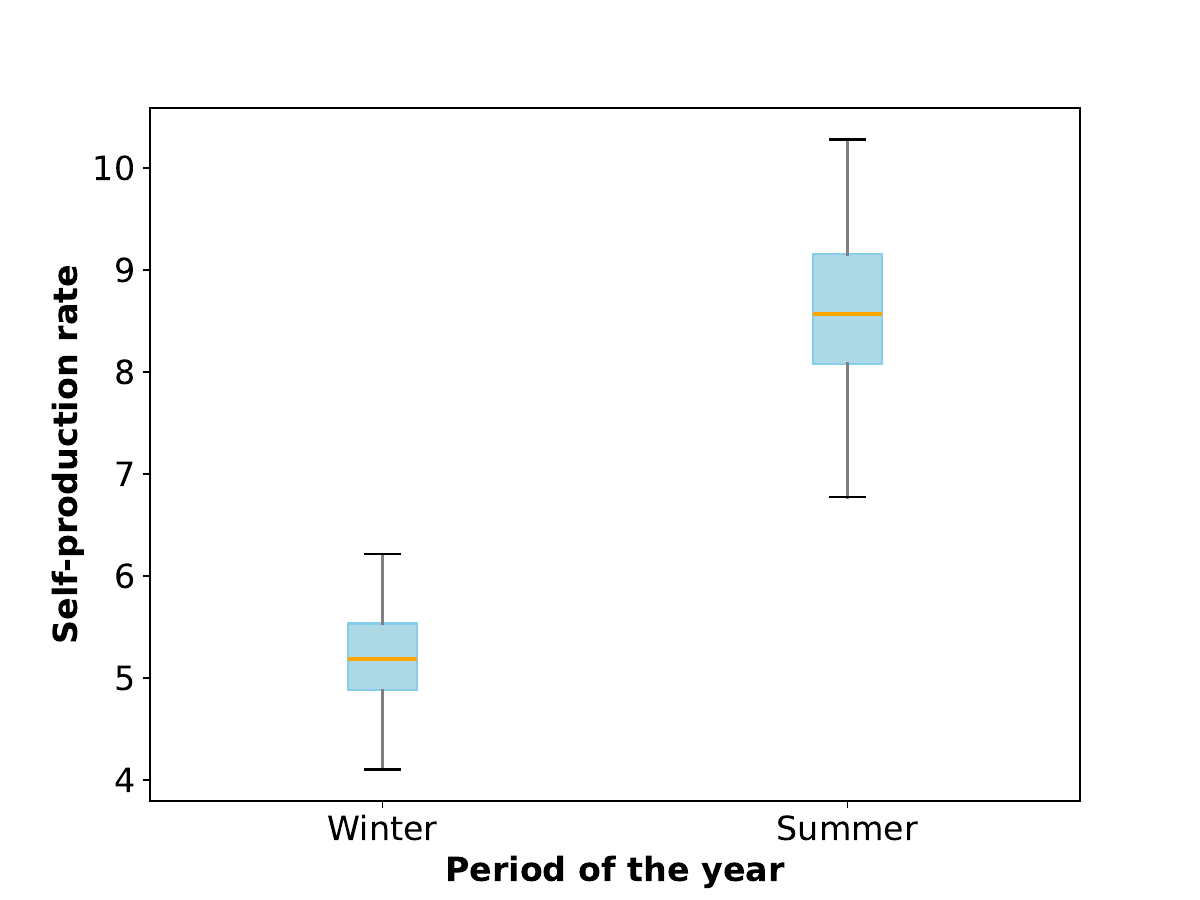}
        \caption{Self-production rate}
        \label{fig:year_spr}
    \end{subfigure}
    \begin{subfigure}[b]{0.328\textwidth}
        \centering
        \includegraphics[width=\textwidth]{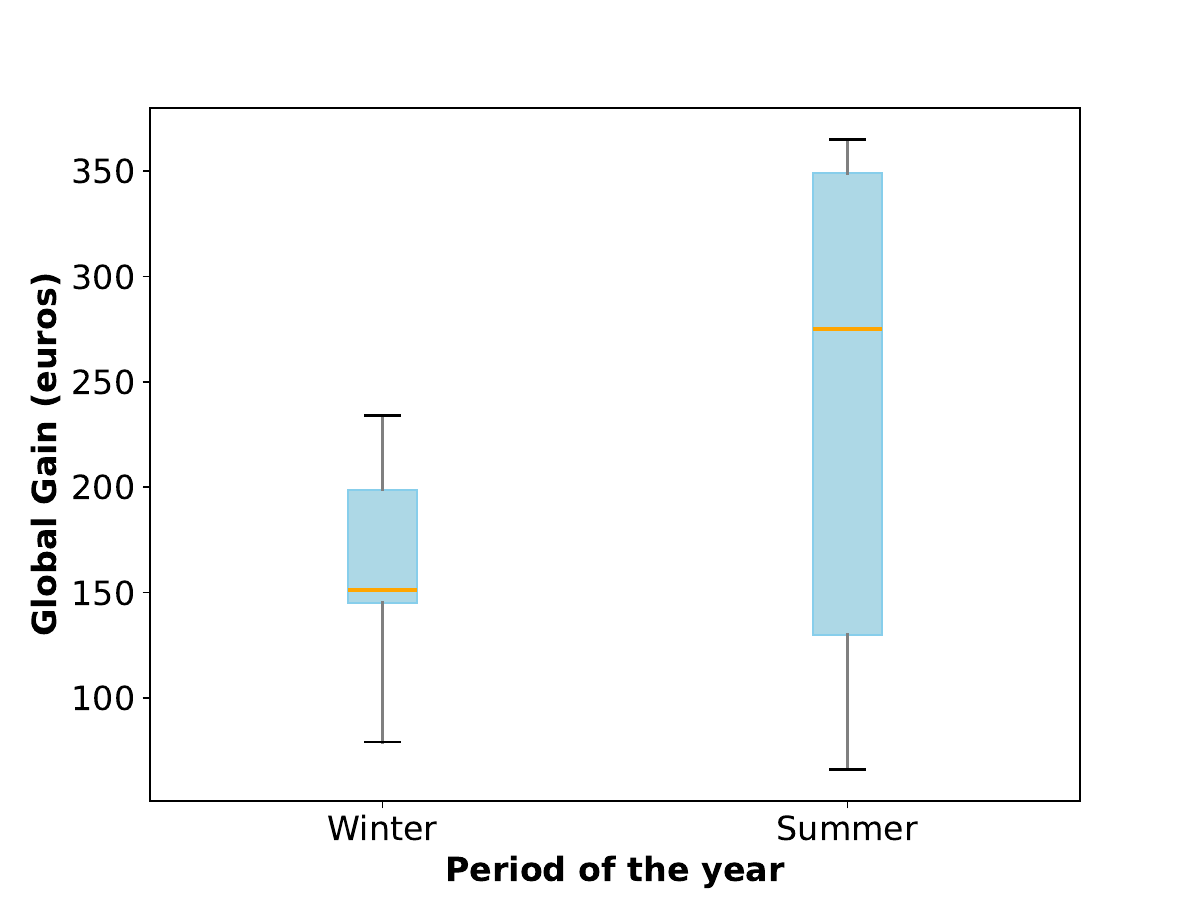}
        \caption{Global gain of the loops}
        \label{fig:year_gain}
    \end{subfigure}
    \caption{Influence of the period of the year on operational indicators}
    \label{fig:period-all}
\end{figure}

The previously mentioned inversion is not observed when the changes in production come from considering two different periods of the year (see \cref{fig:year_scr} and \ref{fig:year_spr}). This can be explained by the fact that the period of the year has less impact on production than the panel exposition. When production can be divided by 10 due to poor exposition, the ratio between summer and winter is less than 2. In the presented instances, this means that in both summer and winter periods, production volume exceeds consumption volume. Still, considering the summer period seems to be more in favour of self-production than self-consumption.

Realised gain is also more subject to variation during the summer period (see \cref{fig:year_gain}), as the producers with the largest installed power and their geographic placement have a greater influence during the summer than during winter. This makes the positioning and capacity of larger producers more critical for maximizing overall gain when considering summer production peaks.

\section{Conclusion and future work}
\label{sec:conclusion}
In this article, we presented various mixed-integer linear programming models to optimise the design of collective self-consumption communities. We first proposed models for the optimisation of the design addressing one (SLCpct) or multiple (MLCpct) loop problems. An interest of the presented models lies in their ability to address both the production flow distribution problem and the design aspect in a single model, as the design aspect of the model is rarely tackled in related work. To manage the complexity arising from this coupling, we proposed decomposition methods, demonstrating their effectiveness through tests on different types of instances. To cope with spatially growing multiple communities problems (inducing a higher number of prosumers) we proposed a Dantzig-Wolfe decomposed model (MLCol) and a dedicated algorithm for the enumeration of feasible loops leveraging the spatial sparsity of actors, avoiding the use of a branch-and-price algorithm. To make the models actionable on large time horizons - which is needed in the design task - we proposed a Benders decomposition of SLCpct and MLCol models leading to models that keep reasonable solving time on large time horizons (SLExt and MLColExt).

Perspective for future work include an enhanced version of the Benders' algorithm to select specific cuts and accelerate the solving process, and to determine a fair distribution of the benefits generated by the energy exchanges, providing decision-makers with a pricing policy. A third identified perspective is the addition of storage facilities in the models, which however breaks the independence of the time steps and opens computational challenges.

We aimed through this article to provide comprehensive insights into the optimisation of collective self-consumption communities, considering both technical and operational aspects. We conducted a performance-oriented analysis on instances of various sizes and configurations, and also performed a sensitivity analysis on operational parameters.
Our computational study shows how these models can support decision-making and help guide the influence of solar plant installation on self-consumption loop design.

\section{Acknowledgment}
We thank Amarenco for providing us with their self-consumption business model and fine-grained data which offered a basis for this work and a motivating use case.

\section{Compliance with Ethical Standards}
\subsection{Funding}
This study was funded by the French Ministry of Economy, Finance and Industrial and Digital Sovereignty through a \textit{France Relance} project.
Mathieu Besançon was supported by the French ANR through the MIAI Cluster Grenoble (reference ANR-23-IACL-0006).

\subsection{Ethical approval}
This article does not contain any studies with human participants or animals performed by any of the authors.

%% If you have bibdatabase file and want bibtex to generate the
%% bibitems, please use
%%

\begin{footnotesize}
\bibliographystyle{apalike} 
\bibliography{output}
\end{footnotesize}

%% else use the following coding to input the bibitems directly in the
%% TeX file.

% \begin{thebibliography}{00}

% %% \bibitem[Author(year)]{label}
% %% Text of bibliographic item

% \bibitem[ ()]{}

% \end{thebibliography}

\newpage
\appendix
\section{Summary of notations}\label{app:notations}
\begin{table}[h!]
\centering
\begin{tabular}{|p{\linewidth}|}

\hline
\textbf{Sets} \\
\hline
$\Actorsset = \{A_{1}, \ldots, A_{n}\}$ all available actors that are potential members of a self-consumption loop\\
$\Scenarioset = \{s_{1}, \ldots, s_{n}\}$ the set of production and consumption scenarios\\
$\Timeset = \{t_{1}, \ldots, t_{n}\}$ the set time steps \\
$\Compactloopset = {l_{1}, \ldots, l_{n/2}}$ set of possible self-consumption loops having at least 2 actors each\\
$\Edgeset$ set of edges in the graph \\
$\Neighbourset_i$ set containing the neighbours of node $i$ \\
$\Extendedloopset$ set of maximal loops \\
$\Excessset_{st}$ set of loops having a positive net production for a given $(s,t)$ tuple \\
$\Vertexset$ set of vertices of the dual feasible set at scenario $s$ and time $t$ \\
$\Directset$ set of coupled actors\\ 

\hline
\textbf{Model Parameters} \\
\hline
$C_i$ net consumption of $A_i$ \\
$P_i$ net quantity of energy produced by $A_i$\\
$D_{i,j} = \min(P_i, C_j)$ \\
$F_i$ grid-sourced electricity cost for $A_i$\\
$R_i$ grid electricity selling price for $A_i$\\
$d_{ij}$ geographical distance between $A_i$ and $A_j$\\
$d_{\text{leg}}$ same loop actors maximal legal geographical distance\\
$P^{\text{inst}}_{i}$ installed power of $A_i$ \\
$P^{\text{inst}}_{\text{leg}}$ loop installed power legal limit\\
$p_{s}$ probability of occurrence of scenario $s$ \\
$Q$, $M$, $B_i$ Big M constants\\ 

\hline
\textbf{Models decision variables}  \\
\hline
$x_i$ binary variables indicating $A_i$ is a member of a self-consumption loop. In multiloop models, those variables are indexed by the loop: $x_i^l$\\
$w_{ij}^{l}$ binary variables linearising $x_{i}^{l} x_{j}^{l}$, for the membership of both actors $i$ and $j$ to loop $l$\\
$e_{ij}$ amount of energy exchanged from $A_i$ to actor $A_j$\\
$f_i$ amount of energy supplied by the grid to $A_i$\\
$r_i$ amount of energy sold to the grid by $A_i$\\
$z$ indicator variable ensuring loop over grid priority \\
$y_i$ auxiliary binary variable ensuring loop over grid priority \\
$v_h$ binary variable capturing if a loop is chosen\\
$a_i^h$ binary variable indicating whether loop $h$ contains actor $i$\\
$\alpha_{ij}^p$, $\alpha_{ij}^c$, $\pi_i$, $\gamma_i$, $\theta_i$, $\kappa_i$, $\nu$, $\lambda_i$, $\tau_i$ dual variables associated to primal constraints\\
$\eta$ objective of the main problem in Benders decompositions\\

\hline
\textbf{Other notations}  \\
\hline
$N_l$ number of proposed loops within a solution\\
$N_{\text{mean}}$ average number of loops over several instances\\
$P_{\text{mean}}$ average installed power of loops\\
$\overline{b}$ highest benefit realised among all actors through the loops\\
$\underline{b}$ lowest benefit realised among all actors through the loops\\
$\phi(x,z)$ solution to the binary subproblem in Benders decompositions \\
\hline
\end{tabular}

\caption{Summary of notations (indices $s$ and $t$ that are used in some models are dropped for conciseness)}
\end{table}

\section{Dual model of the single loop Benders decomposition} \label{app:dual_benders}
The dual of Model~\eqref{model:continuousprojection} is given by the Model~\eqref{model:dualcontinuousprojection} \rev{below defined through Equation}~\eqref{eq:dualobjective1} \rev{to} ~\eqref{cons:dualbounds} :
\begin{subequations}\label{model:dualcontinuousprojection}
\begin{align}
\max_{\alpha,\pi,\gamma,\theta,\kappa,\nu,\lambda,\tau} \, & \sum_{i,j\in\Edgeset} (\alpha_{ij}^p x_i P_i + \alpha_{ij}^c x_j C_j) + &\nonumber \\
& \sum_i ( -(1-x_i) M \kappa_i + (P_i - C_i) \pi_i + M z \lambda_i + B_i \tau_i + &\nonumber\\
& \gamma_i x_i Q ) + &\nonumber\\
& \nu (\sum_i x_i (P_i - C_i) + (1-z) M) & \label{eq:dualobjective1} \\
\mathrm{s.t. }\,\, & -\alpha_{ij}^p - \alpha_{ij}^c + \pi_j - \pi_i - \tau_i \geq 0  & (e_{ij})\label{cons:dualflowvar} \\
& \kappa_i + \gamma_i + \gamma_i + \theta_i + \lambda_i + \nu \leq 0 & (y_i) \\
& -\kappa_i + \pi_i - \theta_i + \tau_i \leq R_i & (r_i) \\
& -\pi_i \leq F_i & (f_i)\\
& \alpha^p, \alpha^c, \gamma, \theta, \nu, \lambda, \tau \leq 0, \kappa \geq 0.\label{cons:dualbounds}
\end{align}
\end{subequations}

\section{Multiloop compact model} \label{app:multiloop}

\begin{flushleft}
    
\adjustbox{width=1\linewidth}{%
\begin{minipage}{1.12\textwidth}
\begin{subequations}\label{model:multiloop-cpct}
\begin{align}
\min_{x,z,w,e,r,f,z,y} & \sum_{s\in\Scenarioset} \sum_{t\in\Timeset} \sum_{i\in [n]} p_s\, (F_i^{st} f_i^{st} - R_i^{st} r_i^{st}) & \\
\text{s.t. }
& \sum_{j\in\Neighbourset_i} e_{ij}^{st} + r_i^{st} + C_i^{st} = \sum_{j\in\Neighbourset_i} e_{ji}^{st} + f_i^{st} + P_i^{st} & \forall i \in [n], s\in\Scenarioset,t\in\Timeset \\
& y_{li}^{st} \leq x_{i}^{l} Q^{st} & \forall i \in [n], s\in\Scenarioset, t\in\Timeset, l\in\Compactloopset \label{eq:scl-1}\\
& y_{li}^{st} \leq r_i^{st} & \forall i \in [n], s\in\Scenarioset, t\in\Timeset, l\in\Compactloopset \\
& y_{li}^{st} \geq r_{i}^{st} - (1-x_{i}^{l}) Q^{st} & \forall i \in [n], s\in\Scenarioset, t\in\Timeset, l\in\Compactloopset \\
& \sum_{i\in [n]} y_{li}^{st} \leq \sum_{i\in [n]} x_{i}^{l} (P_i^{st} - C_i^{st}) + M^{st} (1 - z_l^{st}) & \forall s\in\Scenarioset, t\in\Timeset, l\in\Compactloopset \\
& y_{li}^{st} \leq M^{st} z_l^{st} & \forall i \in [n], \forall s\in\Scenarioset, t\in\Timeset, l\in\Compactloopset \label{eq:scl-5}\\
& \sum_{i \in [n]} x_{i}^{l} P^{\text{inst}}_i \leq P_{\text{leg}}^{\text{inst}} & \forall l \in \Compactloopset\label{eq:knapsackmultiloop} \\
& x_{i}^{l} + x_{j}^{l} \leq 1 & \forall (i,j) \notin \Edgeset, l\in\Compactloopset \label{eq:ml-distance-constraint}\\
& \sum_{j\in\Neighbourset_i} e_{ij}^{st} + r_i^{st} \leq B_i^{st} & \forall i \in [n], s\in\Scenarioset,t\in\Timeset \label{eq:ml-circulation}\\
& \sum_{l\in\Compactloopset} x_{i}^{l} \leq 1 & \forall i \in [n] \label{eq:ml-oneloop}\\
& e_{ij}^{st} \le \sum_{l\in\Compactloopset} w_{ij}^{l} D_{ij}^{st} & \forall (i,j) \in \Edgeset, s\in\Scenarioset,t\in\Timeset \label{eq:consflowmultiloop} \\
& w_{ij}^{l} \le x_{i}^{l} & \forall (i,j) \in \Edgeset, \forall l \in \Compactloopset \label{eq:wij1}\\
& w_{ij}^{l} \le x_{j}^{l} & \forall (i,j) \in \Edgeset, \forall l \in \Compactloopset \label{eq:wij2}\\
& \sum_{i\in[n]} x_{i}^{l} \geq \sum_{i \in [n]} x_{i}^{l+1} & \forall l \in 1\dots |\Compactloopset|-1\label{eq:symbreak} \\
& z^{st}_l = 1 \Leftrightarrow  \sum_{i\in [n]} x_i^l (P_i^{st} - C_i^{st}) \geq 0 & \forall l \in \Compactloopset, s\in\Scenarioset,t\in\Timeset \label{eq:scl-6}\\
& x \in \left\{0,1\right\}^{n |\Compactloopset|}, z \in \left\{0,1\right\}^{|\Compactloopset \times \Scenarioset \times \Timeset|}, w\in \left\{0,1\right\}^{ |\Edgeset \times \Compactloopset|} & \\
& e, r, f, y \geq 0. &
\end{align}
\end{subequations}
\end{minipage}
}
\end{flushleft}

The model presents symmetries similar to other clustering problems, any permutation of the loop assignments being equivalent. Constraint~\eqref{eq:symbreak} reduces the number of symmetric solutions by enforcing solutions with several actors decreasing with the loop index and Constraint~\eqref{eq:ml-oneloop} is added to ensure that each actor cannot belong to two different loops.
\newpage

\section{Dantzig-Wolfe decomposition of the multiloop compact model}\label{app:multiloop-danzig-wolfe}

We recall that $\Excessset_{st} \subseteq \Extendedloopset $ contains all loops $h$ that are net producers at scenario $s$ and time $t$. This fixed index set replaces the variable $z^{st}_l$ of the compact model. $D_{ij}$ is defined as $\min(P_i, C_j)$, resulting in Model~\eqref{model:MLExt}.

\begin{subequations}\label{model:MLExt}
\begin{align}
\min_{e,r,f,v} & \sum_{s\in\Scenarioset} \sum_{t\in\Timeset} \sum_{i\in [n]} p_s\, (F_i^{st} f_i^{st} - R_i^{st} r_i^{st}) & \\
\text{s.t. } & e_{ij}^{st} \leq D_{ij}^{st} \sum_{h\in\Extendedloopset} v_h a_i^h a_j^h & \forall (i,j) \in \Edgeset, s\in\Scenarioset,t\in\Timeset\\
& \sum_{j\in\Neighbourset_i} e_{ij}^{st} + r_i^{st} + C_i^{st} = \sum_{j\in\Neighbourset_i} e_{ji}^{st} + f_i^{st} + P_i^{st} &  \forall i \in [n], s\in\Scenarioset,t\in\Timeset \\
& r_i^{st} \leq (1-\sum_{h\notin \Excessset_{st} } v_h a_i^h ) Q^{st} & \forall i\in [n], s\in\Scenarioset, t\in\Timeset \\
& \sum_{j\in \Neighbourset_i} e_{ij}^{st} + r_i^{st} \leq P_{i}^{st} & \forall i\in [n], s\in\Scenarioset, t\in\Timeset \\
& \sum_{i\in[n]} r_i^{st} a_i^h \leq \sum_{i\in [n]} a_i^h v_h (P_i^{st} - C_i^{st}) + Q^{st} (1-v_h) & \forall s\in\Scenarioset, t\in\Timeset, h \in \Excessset_{st} \\
& \sum_{h\in\Extendedloopset} a_i^h v_h \leq 1 & \forall i \in [n]\\
& v \in \left\{0,1\right\}^{|\Extendedloopset|} \\
& e, r, f, y \geq 0.
\end{align}
\end{subequations}

\newpage

\end{document}